\documentclass{article}

\usepackage[hyperref]{package_RFb}

\usepackage[numbers]{natbib}
\usepackage[top=3cm,bottom=3cm,left=3.5cm,right=3.5cm]{geometry}

\title{Correction to ``The stepping stone model in a random environment'' - limit theorems for the occupation time and intersection time of reversible random walks in random environments}
\author{Rapha\"el Forien\footnote{BioSP, INRAE, France. 
		\url{raphael.forien@inrae.fr}, {0000-0002-8901-4921}.}}

\DeclareDocumentCommand{\mean}{m}{\int_\Omega #1 d \mu}

\DeclareDocumentCommand\Pq{o m g}{% Probabilite quenched
	\mathbb{P}^\omega%
	\IfNoValueF{#1}{_{#1}}% Indice optionnel
	\left(%
	\IfNoValueF{#3}{\left.}%
	#2%
	\IfNoValueF{#3}{\:\right|\: #3}% Conditionnement optionnel
	\right)}
\DeclareDocumentCommand\Eq{o m g}{% Esperance \E[indice]{variable}{condition}
	\mathbb{E}^\omega%
	\IfNoValueF{#1}{_{#1}}% Indice optionnel
	\left[%
	\IfNoValueF{#3}{\left.}%
	#2%
	\IfNoValueF{#3}{\:\right|\: #3}% Conditionnement optionnel
	\right]}

\DeclareDocumentCommand{\plusminusone}{}{\lbrace -1, 1 \rbrace}

\begin{document}
	\maketitle
	
	\begin{abstract}
		This note corrects a mistake in the author's above-mentionned published work regarding the intersection local time of two independent random walks in a random environment. The random walks each behave as nearest neighbour random walks in random conductances in the same random environment. The proof of the main result in the original work relied on the derivation of the scaling limit of an additive functionnal of the environment at the locations where the two independent random walks meet, which was shown to be given by a constant times the intersection local time of two Brownian motions. This note shows how the value of the constant denoted by $ \gamma $ in this work should be updated to correct the statement. The derivation of this constant uses new results on the scaling limit of the occupation time of reversible random walks in a random environment.
		
		~~
		
		\textbf{Keywords: } Random walks in random environments; coalescent processes; occupation times.
		\textbf{Subject: } 60F17; 60J55; 60G99.
	\end{abstract}

	\section*{Introduction}
	
	In \cite{forien_stepping_2019}, the author studied a model of genetic isolation by distance in a population divided in discrete demes with the graph structure induced by $ \Z $.
	The size of the subpopulation in each deme was given by an ergodic stationary random field on $ \Z $, and allele frequencies in each deme was assumed to follow a Wright-Fisher diffusion with exchanges between neighbouring demes.
	It was shown that, under a diffusive scaling of time and space, the dynamics of allele frequencies converges to the solution to a stochastic partial differential equation also called continuous-sites stepping stone model, with an effective diffusion coefficient and an effective strength of genetic drift expressed as functions of the law of the random field.
	The proof relied on an analysis of the moment dual of the allele frequencies process, which takes the form of a system of coalescing random walks in a random environment (determined by the random field). 
	Under the same diffusive scaling, this system of random walks converges to a system of coalescing Brownian motions, which coalesce when their intersection local time exceeds a random expontial threshold.
	The effective diffusion and strength of genetic drift parameters of the SPDE then correspond to the infinitesimal variance and the parameter of the exponential threshold, respectively.
	
	It turns out that the expression for the effective strength of genetic drift / coalescence parameter given in \cite{forien_stepping_2019} is incorrect.
	The next section details how the statements of \cite{forien_stepping_2019} should be modified to correct this, and the rest of this paper is devoted to the proof of the updated result.
	The correction only concerns the convergence of the system of coalescing random walks in a random environment, so we do not recall here the details of the stepping stone model.
	The derivation of the correct effective coalescence parameter requires new results on the scaling limit of the occupation time of two independent random walks in a joint random environment.
	In passing, we also use the same technique to prove a similar result on the scaling limit of the occupation time of a single random walk in such a random environment.
	
	\section{Correction to the statements} \label{sec:statements}
	
	Let us here detail how the statements in \cite{forien_stepping_2019} need to be modified.
	The main result of the paper, Theorem~1.4, holds with the only modification that, contrary to what is stated in (1.5),
	\begin{equation} \label{gamma}
		\gamma = \frac{\mean{N (N_3)^2}}{\left( \mean{N N_3} \right)^2}.
	\end{equation}
	The other statements are similarly affected, notably Theorem~2.8.
	The mistake appears in Section~3.3 (see below), and no correction is needed outside of this section.
	Incidentally, the corrected proof that we give below makes Section~3.2 redundant, even though the results in it (Proposition~3.3) remain correct.
	As a result, the correction given below should be taken as a replacement for Sections~3.2 and 3.3 in \cite{forien_stepping_2019}.
	
	Point 2 of Remark~1.5 in \cite{forien_stepping_2019} then needs to be modified as well.
	Indeed, by the Cauchy-Schwarz inequality,
	\begin{equation*}
		\left( \mean{N N_3} \right)^2 \leq \mean{N (N_3)^2} \mean{N}.
	\end{equation*}
	Hence
	\begin{equation} \label{ineq_gamma_N}
		\frac{1}{\gamma} \leq \mean{N}.
	\end{equation}
	Note that this inquality is the opposite of the one obtained in Remark~1.5 of \cite{forien_stepping_2019} (under an additional assumption).
	This means that the ``apparent'' population density $ 1/\gamma $ which would be estimated by fitting a homogeneous model of isolation by distance is in fact \emph{smaller} than the average population density $ \mean{N} $.
	%	This turns out to be independent of the fact that lineages spend more time in crowded regions (see Subsection~\ref{subsec:general_rates}).
	
	\section{Model and notations} \label{sec:model}
	
	\subsection{Random walk in a random environment} \label{subsec:def_xi}
	
	Let us here recall the setting of Section~3 in \cite{forien_stepping_2019}.
	Let $ (\Omega, \mathcal{B}, \mu) $ be a probability space on which is defined a family of measurable maps $ (T^x)_{x\in \Z} $, $ T^x : \Omega \to \Omega $ such that
	\begin{enumerate}[i)]
		\item $ T^x \circ T^y = T^{x+y} $ for all $ x, y \in \Z $ and $ T^0 = Id_{\Omega} $, \label{ass:group}
		\item $ \mu(T^x A) = \mu(A) $ for all $ A \in \mathcal{B} $ and $ x \in \Z $, \label{ass:translation_invariance}
		\item if $ A \in \mathcal{B} $ is such that $ T^x A = A $ for all $ x \in \Z $, then $ \mu(A) \in \{0, 1 \} $.\label{ass:ergodicity}
	\end{enumerate}
	
	Now let $ j : \Omega \times \plusminusone \to \R_+ $ be a measurable function and assume that there exists a measurable function $ \pi : \Omega \to \R_+ $ such that, for $ \mu $-almost every $ \omega \in \Omega $ and $ z \in \lbrace -1, 1 \rbrace $,
	\begin{equation} \label{reversibility}
		\pi(\omega) j(\omega,z) = \pi(T^z \omega) j(T^z\omega, -z),
	\end{equation}
	and
	\begin{equation*}
		\int_\Omega \pi(\omega) \mu(d\omega) = 1.
	\end{equation*}
	Further assume that there exists $ K > 0 $ such that, for $ z \in \plusminusone $ and $ \mu $-almost every $ \omega \in \Omega $,
	\begin{align} \tag{$\mathcal{U.E.}$} \label{uniform_ellipticity}
		\frac{1}{K} \leq j(\omega,z) \leq K, && \frac{1}{K} \leq \pi(\omega) \leq K.
	\end{align}
	We also consider another measureable function $ N : \Omega \to \R_+ $ and assume that, for $ \mu $-almost every $ \omega \in \Omega $,
	\begin{equation} \label{UE_N}
		\frac{1}{K} \leq N(\omega) \leq K.
	\end{equation}
	Note that, from any measurable $ f : \Omega \to \R $, we can define a random field on $ \Z $, denoted by $ \bm{f} : \omega \to \lbrace f(\omega, x), x \in \Z \rbrace $, through the relation
	\begin{equation}
		f(\omega,x) := f(T^x \omega).
	\end{equation}
	By assumptions~\ref{ass:group}-\ref{ass:ergodicity}, $ \bm{f} $ is stationary and ergodic with respect to translations.
	
	In \cite{forien_stepping_2019}, we use $ \bm{N} $ and $ \bm{j}(\cdot,\pm 1) $ to define a stepping stone model in which the size of the deme located at $ x \in \Z $ is proportional to $ N(\omega,x) $, and ancestral lineages follow random walks with jump rates given by $ j(\omega,x,\pm 1) $ (see Definition~\ref{def:rw} below).
	We refer to \cite{forien_stepping_2019} for details on this stepping stone model, as the present note only deals with its moment dual, defined as a system of coalescing random walks in a random environment.
	
	\begin{definition} \label{def:rw}
		For $ \omega \in \Omega $, let $ (\xi_t, t \geq 0) $ be a random walk on $ \Z $ such that it jumps from $ x \in \Z $ to $ x + z \in \Z $ at rate $ j(\omega,x,z) $ if $ z \in \plusminusone $ and at rate zero otherwise.
		In the following, $ \Pq[x]{\cdot} $ (resp. $ \Eq[x]{\cdot} $) denotes the distribution (resp. the expectation with respect to the distribution) of the random walk $ (\xi_t, t \geq 0) $ started from $ x \in \Z $ conditionally on the environment $ \omega $.
	\end{definition}
	
	We remark that \cite{forien_stepping_2019} assumes a particular dependence between $ j(\omega,z) $ and $ N(\omega) $, so that \eqref{uniform_ellipticity} follows from \eqref{UE_N}, but the proofs readily extend to the present setting, provided \eqref{uniform_ellipticity} is satisfied.

	\subsection{Functional and local central limit theorems for the random walk} \label{subsec:clt}
	
	Note that, in view of \eqref{reversibility}, $ \bm{\pi} $ defines a reversible measure for the random walk $ (\xi_t, t \geq 0) $ on $ \Z $.
	Together with \eqref{uniform_ellipticity}, this implies the following functional central limit theorem for the random walk, obtained in \cite{forien_stepping_2019} adapting the arguments of \cite{lam_quenched_2014,derrien_local_2015,depauw_variance_2009}.
	For $ n \in \N $, let $ (\xi^n_t, t \geq 0) $ be defined as
	\begin{equation*}
		\xi^n_t := \frac{1}{\sqrt{n}} \xi_{nt}.
	\end{equation*}
	For $ T > 0 $, let $ D([0,T], \R) $ denote the space of \cadlag real-valued functions endowed with the usual Skorokhod topology.
	
	\begin{theorem}[Theorem~2.3 in \cite{forien_stepping_2019}] \label{thm:functional_clt}
		Fix $ T > 0 $ and assume that $ \xi^n_0 $ is deterministic and converges to $ x_0 \in \R $.
		Suppose that assumptions~\ref{ass:group}-\ref{ass:ergodicity} and \eqref{uniform_ellipticity} are satisfied.
		Then, for $ \mu $-almost every environment $ \omega \in \Omega $, as $ n \to \infty $, $ (\xi^n_t, t \in [0,T]) $ converges in distribution in $ D([0,T], \R) $ to Brownian motion started from $ x_0 $ with variance $ \sigma^2 $ given by
		\begin{equation*}
			\sigma^2 = 2 \left( \int_\Omega \frac{1}{\pi(\omega) j(\omega,1)} \mu(d\omega) \right)^{-1}.
		\end{equation*}
	\end{theorem}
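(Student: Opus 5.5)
The natural route is the classical corrector (harmonic coordinates) method for reversible random walks among random conductances in dimension one, where the corrector can be written down explicitly.

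\textbf{Step 1: harmonic coordinate.} Set $c(\omega,x) := \pi(\omega,x) j(\omega,x,1)$, the conductance of the edge $\{x,x+1\}$. By \eqref{reversibility} it is symmetric, since $c(\omega,x) = \pi(\omega,x+1) j(\omega,x+1,-1)$. Define
\begin{equation*}
	\Phi(\omega,x) := \sum_{y=0}^{x-1} \frac{1}{c(\omega,y)} \quad (x \geq 0), \qquad \Phi(\omega,x) := -\sum_{y=x}^{-1} \frac{1}{c(\omega,y)} \quad (x < 0).
\end{equation*}
Then
\begin{equation*}
	j(\omega,x,1)\bigl(\Phi(x+1)-\Phi(x)\bigr) + j(\omega,x,-1)\bigl(\Phi(x-1)-\Phi(x)\bigr) = \frac{1}{\pi(\omega,x)} - \frac{1}{\pi(\omega,x)} = 0,
\end{equation*}
so $M_t := \Phi(\omega,\xi_t)$ is a quenched martingale. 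Its predictable quadratic variation is $\int_0^t \psi(T^{\xi_s}\omega)\,ds$, where
\begin{equation*}
	\psi(\omega) = \frac{j(\omega,1)}{c(\omega,0)^2} + \frac{j(\omega,-1)}{c(\omega,-1)^2} .
\end{equation*}
By \eqref{reversibility}, $c(\omega,-1) = \pi(\omega) j(\omega,-1)$. Hence
\begin{equation*}
	\psi = \frac{1}{\pi}\left(\frac{1}{c(\cdot,0)} + \frac{1}{c(\cdot,-1)}\right).
\end{equation*}

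\textbf{Step 2: ergodic theorem for the environment seen from the walk.} The process $\omega_t := T^{\xi_t}\omega$ is reversible and stationary under $\pi\,d\mu$, a probability measure. It is ergodic by \ref{ass:ergodicity} and the irreducibility coming from \eqref{uniform_ellipticity}. So, $\mu$-a.s.,
\begin{equation*}
	\frac{1}{n}\langle M\rangle_{nt} \to t \int_\Omega \pi\psi \, d\mu = 2t \int_\Omega \frac{1}{c(\omega,0)}\, \mu(d\omega),
\end{equation*}
using the invariance of $\mu$ under $T^{-1}$ for the second term. Upgrading this from almost sure convergence under the annealed stationary law to a quenched statement for each fixed starting point is where I expect the main work. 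I would first try the standard route: ergodicity gives the convergence for $\pi\mu$-a.e.\ $\omega$ when the walk starts at $0$; shifting the starting point only changes the environment by $T^{x}$; and uniform ellipticity together with the sublinear growth of $\xi$ makes the dependence on $\xi^n_0 \to x_0$ negligible. The jumps of $M/\sqrt n$ are bounded by $K^2/\sqrt n$. The martingale FCLT then gives that $M_{n\cdot}/\sqrt n$ converges to a Brownian motion of variance $2\int \mu(d\omega)/c(\omega,0)$.

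\textbf{Step 3: removing the corrector.} By Birkhoff's theorem for the stationary ergodic sequence $1/c(\omega,y)$, we have $\Phi(\omega,x)/x \to \bar a := \int \mu(d\omega)/c(\omega,0)$ as $|x|\to\infty$, $\mu$-a.s. Since in dimension one this holds uniformly on $|x|\le R\sqrt n$ after dividing by $\sqrt n$, one gets
\begin{equation*}
	\sup_{t\le T}\left|\frac{\Phi(\xi_{nt})}{\sqrt n} - \bar a\, \xi^n_t\right| \to 0
\end{equation*}
in probability. For this we also need tightness of $\sup_{t\le T}|\xi^n_t|$. That follows from Doob's inequality applied to $M$, because $|\Phi(x)| \ge |x|/K^2$. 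Therefore $\xi^n = \bar a^{-1} M_{n\cdot}/\sqrt n + o(1)$, which converges to Brownian motion started from $x_0$ with variance
\begin{equation*}
	\bar a^{-2}\cdot 2\bar a = \frac{2}{\bar a} = 2\left(\int_\Omega \frac{1}{\pi(\omega) j(\omega,1)} \mu(d\omega)\right)^{-1},
\end{equation*}
as claimed. Convergence in $D([0,T],\R)$ follows because the error term tends to zero uniformly on $[0,T]$.
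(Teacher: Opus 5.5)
Your computations are correct: the symmetry of $c$, the harmonicity of $\Phi$, the formula for $\psi$, the identity $\int_\Omega \pi\psi\,d\mu = 2\bar a$, tightness from $|\Phi(x)-\Phi(y)|\geq |x-y|/K^2$ and Doob, and the limiting variance $2/\bar a$. The strategy is the harmonic-coordinate method of the works the original article adapts \cite{lam_quenched_2014,depauw_variance_2009}; this note only quotes the result.

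The gap is in Step 2, and it is not the annealed-to-quenched passage you single out. For the walk started at $0$, almost sure convergence under $\int \mathbb{P}^\omega_0(\cdot)\,\pi(\omega)\mu(d\omega)$ already gives, by Fubini, $\mathbb{P}^\omega_0$-a.s.\ convergence of $\frac{1}{n}\langle M\rangle_{nt}$ for $\mu$-a.e.\ $\omega$. The real difficulty is that the theorem starts the walk at $x_n := \sqrt{n}\,\xi^n_0$, with $x_n/\sqrt{n}\to x_0\neq 0$ in general. Under $\mathbb{P}^\omega_{x_n}$ the environment process therefore starts from $T^{x_n}\omega$, which moves with $n$. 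The pointwise ergodic theorem gives convergence for each fixed starting environment but no uniformity over shifts. Since $x_n$ sits exactly on the diffusive scale, neither uniform ellipticity nor any sublinearity makes the shift negligible. Recurrence does not help either, because the walk from $0$ needs a time of order $n$ to reach $x_n$. As written, your argument covers only bounded starting points.

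In dimension one you can repair this with the same device as $\Phi$: solve the Poisson equation explicitly. We have $L^\omega F(x) = \pi(\omega,x)^{-1}[c(\omega,x)(F(x+1)-F(x)) - c(\omega,x-1)(F(x)-F(x-1))]$. Set $g := \psi - 2\bar a$, and let $G(x) := \sum_{y=1}^{x}\pi(\omega,y)g(T^y\omega)$ for $x\geq 0$ and $G(x) := -\sum_{y=x+1}^{0}\pi(\omega,y)g(T^y\omega)$ for $x<0$. Then $F$ defined by $F(0)=0$ and $c(\omega,x)(F(x+1)-F(x)) = G(x)$ solves $L^\omega F(x) = g(T^x\omega)$.

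Since $\int_\Omega \pi g\,d\mu = 0$, Birkhoff's theorem gives $\max_{|x|\leq R\sqrt{n}}|G(x)| = o(\sqrt{n})$. By \eqref{uniform_ellipticity} this yields $\max_{|x|\leq R\sqrt{n}}|F(x)| = o(n)$. The martingale $F(\xi_t)-F(\xi_0)-\int_0^t g(T^{\xi_s}\omega)\,ds$ has bracket at most $C\int_0^t (G(\xi_s)^2 + G(\xi_s-1)^2)\,ds$. This is $o(n^2)$ up to time $nT$ while $\xi$ stays in $B(0,R\sqrt{n})$.

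Stop at the exit time of that ball. Your Doob bound makes exit before $nT$ unlikely, uniformly over starting points $|x_n|\leq R'\sqrt{n}$, once $R$ is large. You then get $\frac{1}{n}\int_0^{nt} g(T^{\xi_s}\omega)\,ds\to 0$ in $\mathbb{P}^\omega_{x_n}$-probability for every $\omega$ in the full-measure set where Birkhoff's theorem holds. This is exactly the quadratic-variation input the martingale FCLT needs. It also makes the environment seen from the walk, and its ergodicity, unnecessary.
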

	
	Adapting the arguments of \cite{derrien_local_2015} and using \eqref{uniform_ellipticity}, the author was also able to prove the following local central limit theorem in \cite{forien_stepping_2019}.
	For $ \omega\in \Omega $, $ t \geq 0 $ and $ x, y \in \Z $, set
	\begin{align} \label{def_g_omega}
		g^\omega_t(x,y) := \Pq[x]{\xi_t = y}
	\end{align}
	and for $ t > 0 $, $ x \in \R $, define
	\begin{align*}
		G_t(x) := \frac{1}{\sqrt{2\pi \sigma^2 t}} \exp \left( - \frac{x^2}{2\sigma^2 t} \right).
	\end{align*}
	
	\begin{theorem}[Theorem~2.3 in \cite{forien_stepping_2019}] \label{thm:local_clt}
		For all $ 0 < \varepsilon < T $ and $ R > 0 $ and for $ \mu $-almost every environment $ \omega \in \Omega $,
		\begin{align*}
			\lim_{n \to \infty} \: \sup_{t \in [\varepsilon, T]} \: \max_{\substack{x \in B(0,R\sqrt{n}) \cap \Z \\ y \in B(0, R \sqrt{n}) \cap \Z}} \: \abs{ \sqrt{n} \frac{g^\omega_{nt}(x,y)}{\pi(\omega,y)} - G_t \left( \frac{x-y}{\sqrt{n}} \right) } = 0.
		\end{align*}
	\end{theorem}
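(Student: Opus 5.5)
The plan follows the strategy of \cite{derrien_local_2015}: combine the functional central limit theorem (Theorem~\ref{thm:functional_clt}) with a parabolic Harnack inequality, which holds for the walk under \eqref{uniform_ellipticity}. Consider the rescaled kernel
\[ u_n(t,x,y) := \sqrt{n}\, g^\omega_{nt}(x,y)/\pi(\omega,y). \]
By reversibility \eqref{reversibility}, $ g^\omega_t(x,y)/\pi(\omega,y) $ is symmetric in $ (x,y) $. It solves the heat equation associated with the generator in each variable, and this generator is a divergence-form operator with conductances $ \pi(\omega,x) j(\omega,x,1) $ and speed measure $ \bm{\pi} $.

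The first step is to prove equicontinuity. Uniform ellipticity gives Gaussian (Carne--Varopoulos / Nash type) upper bounds
\[ g^\omega_t(x,y) \leq C t^{-1/2}\exp\!\left(-c\abs{x-y}^2/t\right), \]
so $ u_n $ is uniformly bounded on $ [\varepsilon,T] $. Moser's parabolic Harnack inequality in dimension one, with constants depending only on $ K $, then gives Hölder continuity of $ u_n(t,x,y) $ in $ (t/n, x/\sqrt n, y/\sqrt n) $, uniformly in $ n $. Along any subsequence we can therefore extract a locally uniformly convergent subsequence of the maps $ (t,a,b)\mapsto u_n(t,\lfloor a\sqrt n\rfloor,\lfloor b\sqrt n\rfloor) $ on $ [\varepsilon,T]\times[-R,R]^2 $.

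The second step identifies the limit. Fix $ a $ and an interval $ I=[b_1,b_2] $. Theorem~\ref{thm:functional_clt} gives
\[ \Pq[\lfloor a\sqrt n\rfloor]{\xi^n_t\in I} \to \int_I G_t(b-a)\,db. \]
The left side equals
\[ \sum_{y\in \sqrt n I} \pi(\omega,y)\, u_n(t,x,y)\, n^{-1/2}. \]
Since $ u_n $ is nearly constant on mesoscopic blocks by equicontinuity, Birkhoff's ergodic theorem gives $ n^{-1/2}\sum_{y\in\sqrt n J}\pi(\omega,y)\to |J| $ because $ \int\pi\,d\mu=1 $. Hence the left side converges to $ \int_I u(t,a,b)\,db $ for any subsequential limit $ u $. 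Since $ I $ is arbitrary and $ u $ is continuous, $ u=G_t(\cdot) $, so the whole sequence converges locally uniformly, which is the claim. To get this for $ \mu $-a.e.\ $ \omega $ simultaneously for all starting points, I would use the ergodic theorem in its uniform form $ \sup_{|y|\le R\sqrt n}|n^{-1/2}\sum_{0}^{y}(\pi-1)|\to0 $, and apply Theorem~\ref{thm:functional_clt} along a countable dense set of $ a $. Equicontinuity in $ a $ extends the convergence to all starting points, and a similar argument handles uniformity in $ t\in[\varepsilon,T] $.

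I expect the main obstacle to be the uniform Harnack/Hölder estimate. One has to check that the constants in the parabolic Harnack inequality depend only on $ K $, in a discrete time-continuous setting and at all scales from $ O(1) $ up to $ \sqrt n $. I would try to import this from Delmotte's characterization: volume doubling and the Poincaré inequality are immediate for $ \Z $ with uniformly elliptic weights. A second, milder point is that Theorem~\ref{thm:functional_clt} assumes the starting point converges. That suffices for the countable dense set argument above.
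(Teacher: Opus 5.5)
Your argument is correct, but it takes a different route from the proof the paper relies on: the one cited from \cite{forien_stepping_2019}, adapted from \cite{derrien_local_2015}, whose structure is mirrored in the proof of Theorem~\ref{thm:local_time_1_rw}.

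That proof uses neither a Harnack inequality nor compactness. Its only regularity inputs are the on-diagonal bound of Lemma~\ref{lemma:bound_h} and the explicit $1/2$-H\"older bound $|h^\omega_t(x,y)-h^\omega_t(x,z)|\le C t^{-3/4}|y-z|^{1/2}$ of Lemma~\ref{lemma:continuity_h}, which is what a one-dimensional energy plus Cauchy--Schwarz estimate gives. It then splits $\sqrt{n}\,h^\omega_{nt}(x,y)-G_t$ into four terms:
\begin{itemize}
\item the defect $1-\Pi^{n,\delta}$ of the $\bm{\pi}$-average over $B(y,\delta\sqrt n)$, which vanishes by the uniform ergodic theorem (Lemma~4.3 of \cite{forien_stepping_2019});
\item the oscillation of $h^\omega_{nt}(x,\cdot)$ over that ball, at most $C t^{-3/4}\delta^{1/2}$ by Lemma~\ref{lemma:continuity_h};
\item the gap between $\frac{1}{2\delta}\mathbb{P}^\omega_x(\xi^n_t\in B(y/\sqrt n,\delta))$ and its Gaussian counterpart, handled by the CLT;
\item a Gaussian mollification error.
\end{itemize}
One then sends $n\to\infty$ and afterwards $\delta\to0$.

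Your version is essentially the Barlow--Hambly local limit theorem: marginal CLT for countably many starting points, uniform parabolic Harnack/H\"older regularity, and ergodic convergence of volumes, followed by uniqueness of subsequential limits. What it buys: it is dimension-free and needs no explicit H\"older exponent. Your countable dense set of starting points plus equicontinuity in $x$ also cleanly handles the fact that the null set in Theorem~\ref{thm:functional_clt} may depend on the starting sequence. What the paper's route buys: it is elementary and quantitative, and its explicit kernel estimates are exactly what is reused later in Lemmas~\ref{lemma:H} and~\ref{lemma:continuity_H_xy}.

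Two minor corrections to your write-up. First, the off-diagonal Gaussian bound you quote fails for $|x-y|\gg t$ in continuous time. You only need boundedness of $u_n$ for $t\ge\varepsilon$, and that is Lemma~\ref{lemma:bound_h}. Second, the maps $(t,a,b)\mapsto u_n(t,\lfloor a\sqrt n\rfloor,\lfloor b\sqrt n\rfloor)$ are step functions, so you need the asymptotic-equicontinuity form of Arzel\`a--Ascoli, or a piecewise-linear interpolation.
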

	
	The proof of Theorem~\ref{thm:local_clt} relied on the following two lemmas, which were also adapted from \cite{derrien_local_2015}.
	Let us set, for $ t \geq 0 $ and $ x, y \in \Z $,
	\begin{align} \label{def:ht}
		h^\omega_t(x,y) := \frac{g^\omega_t(x,y)}{\pi(\omega,y)}.
	\end{align}
	
	\begin{lemma}[Lemma~A.1 in \cite{forien_stepping_2019}] \label{lemma:bound_h}
		There exists a constant $ C > 0 $ such that, for $ \mu $-almost every $ \omega \in \Omega $,
		\begin{align*}
			\sup_{t \geq 0} \sup_{x, y \in \Z} \sqrt{t} \, h^\omega_t(x,y) \leq C.
		\end{align*}
	\end{lemma}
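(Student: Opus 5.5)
We prove the on-diagonal heat kernel bound through a Nash inequality for the Dirichlet form of the walk, using \eqref{uniform_ellipticity}. First we reduce to the diagonal. By reversibility \eqref{reversibility}, $h^\omega_t(x,y) = h^\omega_t(y,x)$. Applying Chapman--Kolmogorov and Cauchy--Schwarz with respect to the measure $\bm{\pi}$ gives
\begin{equation*}
	h^\omega_{t}(x,y) = \sum_{z} h^\omega_{t/2}(x,z) h^\omega_{t/2}(z,y) \pi(\omega,z) \leq \sqrt{h^\omega_t(x,x) \, h^\omega_t(y,y)}.
\end{equation*}
It therefore suffices to show $h^\omega_t(x,x) \leq C/\sqrt{t}$ uniformly in $x$ and $\omega$. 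For small $t$ (say $t \leq 1$) the trivial bound $h^\omega_t(x,x) \leq 1/\pi(\omega,x) \leq K$ handles the claim, so only large $t$ matters.

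For large $t$, let $P_t$ be the semigroup acting on $\ell^2(\bm{\pi})$. Its Dirichlet form is
\begin{equation*}
	\mathcal{E}(f,f) = \sum_x \pi(\omega,x) j(\omega,x,1) \bigl(f(x+1)-f(x)\bigr)^2,
\end{equation*}
and \eqref{uniform_ellipticity} gives $\mathcal{E}(f,f) \geq K^{-2} \sum_x (f(x+1)-f(x))^2$. The discrete one-dimensional Nash inequality for the counting measure is
\begin{equation*}
	\norm{f}_2^{6} \leq C \Bigl(\sum_x (f(x+1)-f(x))^2\Bigr) \norm{f}_1^4.
\end{equation*}
It follows from $\norm{f}_\infty^2 \leq \norm{f}_1 \norm{\nabla f}_2$ combined with $\norm{f}_2^2 \leq \norm{f}_\infty\norm{f}_1$. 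Since $\bm{\pi}$ is comparable to counting measure with constants depending only on $K$, this inequality transfers to $\ell^p(\bm{\pi})$ norms with $\mathcal{E}$, with a constant depending only on $K$.

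Next, set $u(t) = \norm{P_t f}_{2,\pi}^2$ with $\norm{f}_{1,\pi} = 1$. Then $u'(t) = -2\mathcal{E}(P_tf,P_tf) \leq -c\, u(t)^3$, because $\norm{P_t f}_{1,\pi} \leq 1$ by invariance of $\bm{\pi}$. Integrating gives $u(t) \leq C' t^{-1/2}$, that is $\norm{P_t}_{1\to2} \leq C t^{-1/4}$. By duality and symmetry, $\norm{P_t}_{2\to\infty} \leq C t^{-1/4}$, so $\norm{P_{2t}}_{1\to\infty} \leq C^2 t^{-1/2}$. Finally, $h^\omega_{2t}(x,y)$ is exactly the kernel of $P_{2t}$ with respect to $\bm{\pi}$, which yields the bound with a constant independent of $\omega$.

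The only delicate point is checking that all constants depend only on $K$, which gives uniformity over $\mu$-almost every $\omega$. This holds because every step uses only \eqref{uniform_ellipticity}.
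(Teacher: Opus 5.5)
\paragraph{Review.} The note does not prove this lemma; it imports it from the earlier paper. Your overall route is the standard Carlen--Kusuoka--Stroock argument and is sound: a Nash inequality for $\mathcal{E}$, the differential inequality for $u(t)=\|P_tf\|_{2,\pi}^2$, then duality together with $\|P_{2t}\|_{1\to\infty}=\sup_{x,y}h^\omega_{2t}(x,y)$.

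There is, however, a wrong step. The justification you give for the discrete Nash inequality does not produce it. Combining $\|f\|_\infty^2\le\|f\|_1\|\nabla f\|_2$ with $\|f\|_2^2\le\|f\|_\infty\|f\|_1$ gives only $\|f\|_2^4\le\|f\|_1^3\|\nabla f\|_2$, that is, $\|f\|_2^8\le\|f\|_1^6\|\nabla f\|_2^2$. This is a Nash-type inequality of \emph{dimension} $2/3$, not $1$. Fed into your differential inequality it yields only $u'\le -c\,u^4$. Hence $u(t)\le C t^{-1/3}$, $\|P_t\|_{1\to 2}\le C t^{-1/6}$, and finally $h^\omega_t\le C t^{-1/3}$, not the claimed $t^{-1/2}$. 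Your first inequality is in fact true on $\Z$, but only because the lattice fixes a scale; its continuum analogue fails by scaling, and it is too weak for the purpose.

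The fix is to put $\|f\|_2$ in place of $\|f\|_1$ in that first inequality. For $f\in\ell^2(\Z)$, write $f(x)^2=\sum_{y<x}(f(y+1)^2-f(y)^2)=-\sum_{y\ge x}(f(y+1)^2-f(y)^2)$. Adding the two expressions and applying Cauchy--Schwarz gives $\|f\|_\infty^2\le\|f\|_2\|\nabla f\|_2$. Then $\|f\|_2^4\le\|f\|_\infty^2\|f\|_1^2\le\|f\|_2\|\nabla f\|_2\|f\|_1^2$, which is exactly $\|f\|_2^6\le\|\nabla f\|_2^2\|f\|_1^4$.

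With this correction the rest of your argument goes through, with constants depending only on $K$. The transfer to $\ell^p(\bm{\pi})$ costs only powers of $K$. The $\ell^1(\bm{\pi})$-contraction holds because $\bm{\pi}$ is invariant.

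Two small remarks:
\begin{itemize}
\item The $1\to\infty$ bound already controls $h^\omega_t(x,y)$ off the diagonal for every $t>0$. The Cauchy--Schwarz reduction to the diagonal and the separate treatment of $t\le 1$ are therefore unnecessary.
\item For uniformity in $\omega$, you should note that \eqref{uniform_ellipticity} and \eqref{reversibility} hold at all sites $T^x\omega$, $x\in\Z$, simultaneously for $\mu$-a.e.\ $\omega$. This follows from countability and the translation invariance of $\mu$.
\end{itemize}
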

	
	\begin{lemma}[Lemma~A.2 in \cite{forien_stepping_2019}] \label{lemma:continuity_h}
		There exists a constant $ C > 0 $ such that, for $ \mu $-almost every $ \omega \in \Omega $,
		\begin{align*}
			\sup_{t \geq 0} \sup_{x, y, z \in \Z} t^{3/4} \frac{\abs{h^\omega_t(x,y) - h^\omega_t(x,z)}}{\abs{y-z}^{1/2}} \leq C.
		\end{align*}
	\end{lemma}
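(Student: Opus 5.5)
We prove the Hölder bound of Lemma~\ref{lemma:continuity_h} by writing the spatial increment of $h^\omega_t$ through the Dirichlet form and bounding that Dirichlet form with the on-diagonal bound of Lemma~\ref{lemma:bound_h}.

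\textbf{Setup.} Let $L^\omega$ be the generator of $\xi$. By \eqref{reversibility}, $L^\omega$ is self-adjoint in $\ell^2(\bm\pi)$. The kernel $h^\omega_t(x,y)$ is symmetric in $(x,y)$, and it satisfies the semigroup identity
\begin{equation*}
h^\omega_t(x,y)=\sum_w h^\omega_{t/2}(x,w)\,h^\omega_{t/2}(w,y)\,\pi(\omega,w).
\end{equation*}
Fix $x$ and $t$, and set $u(\cdot)=h^\omega_t(x,\cdot)$. Then $\lVert u\rVert^2_{\ell^2(\pi)}=h^\omega_{2t}(x,x)$, which is at most $C t^{-1/2}$ by Lemma~\ref{lemma:bound_h}.

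\textbf{Step 1: bound the Dirichlet form of $u$.} The Dirichlet form is
\begin{equation*}
\mathcal E(u,u)=\sum_w \pi(\omega,w)\,j(\omega,w,1)\,\bigl(u(w+1)-u(w)\bigr)^2 .
\end{equation*}
The function $s\mapsto \langle P_s u,P_s u\rangle_\pi = h^\omega_{2t+2s}(x,x)$ is nonnegative and nonincreasing. Its derivative is $-2\mathcal E(P_s u,P_s u)$, and by the spectral theorem this quantity is also nonincreasing in $s$. Comparing the derivative with the value, $\mathcal E(h^\omega_t(x,\cdot),h^\omega_t(x,\cdot))$ is bounded by a constant times $t^{-1}$ times $h^\omega_{t}(x,x)$. (Alternatively, use spectral calculus directly: $\lambda e^{-2\lambda t}\le (et)^{-1}e^{-\lambda t}$.) Combined with Lemma~\ref{lemma:bound_h}, this gives $\mathcal E(u,u)\le C t^{-3/2}$.

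\textbf{Step 2: one-dimensional Cauchy--Schwarz.} Take $y<z$ and telescope along the edges between them:
\begin{equation*}
|u(z)-u(y)|\le \sum_{w=y}^{z-1}|u(w+1)-u(w)|\le |z-y|^{1/2}\Bigl(\sum_w (u(w+1)-u(w))^2\Bigr)^{1/2}.
\end{equation*}
By \eqref{uniform_ellipticity}, $\pi j\ge K^{-2}$, so the last sum is at most $K^2\mathcal E(u,u)\le C t^{-3/2}$. Taking the square root yields
\begin{equation*}
|h^\omega_t(x,z)-h^\omega_t(x,y)|\le C\,|z-y|^{1/2}\,t^{-3/4}.
\end{equation*}
The constant $C$ depends only on $K$ and on the constant of Lemma~\ref{lemma:bound_h}, so it is uniform in $\omega$ on the full-measure set where these bounds hold. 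This is exactly the claim.

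\textbf{Main obstacle.} The delicate point is Step 1: justifying the monotonicity of $s\mapsto\mathcal E(P_su,P_su)$ and the resulting $1/t$ gain rigorously on the infinite lattice. I would handle it with the spectral theorem for the bounded self-adjoint operator $L^\omega$ on $\ell^2(\pi)$. Boundedness holds because the jump rates are bounded by \eqref{uniform_ellipticity}, and $u=P_t\delta_x/\pi(\omega,x)$ lies in $\ell^2(\pi)$. With these facts, all identities reduce to the elementary inequality $\lambda e^{-\lambda t}\le (et)^{-1}$ for $\lambda\ge0$.
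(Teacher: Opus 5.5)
Your proof is correct and complete. The note itself gives no proof of this lemma: it imports it from Lemma~A.2 of \cite{forien_stepping_2019}, where it is adapted from \cite{derrien_local_2015}.

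Your argument is the standard energy--resistance route, and it produces exactly the exponents $3/4$ and $1/2$. It has two steps:
\begin{itemize}
\item the spectral bound $\mathcal{E}(h^\omega_t(x,\cdot),h^\omega_t(x,\cdot))\le (et)^{-1}h^\omega_t(x,x)\le C t^{-3/2}$, obtained via Lemma~\ref{lemma:bound_h};
\item telescoping and Cauchy--Schwarz, using $\pi j\ge K^{-2}$ from \eqref{uniform_ellipticity}.
\end{itemize}
It uses nothing beyond Lemma~\ref{lemma:bound_h}, reversibility and \eqref{uniform_ellipticity}.

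One minor wording point concerns the ``compare derivative with value'' version of Step~1. As written, it starts from $u$ at $s=0$, which only controls the energy at later times $t+s$, not $\mathcal{E}(u,u)$ itself. It should instead be run on $s\mapsto h^\omega_{2s}(x,x)$ over $s\in[t/2,t]$, which gives $\mathcal{E}(h^\omega_t(x,\cdot),h^\omega_t(x,\cdot))\le t^{-1}h^\omega_t(x,x)$. Your spectral-calculus alternative already makes this step rigorous.
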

	
	\subsection{Intersection local time of random walks in a joint random environment} \label{subsec:intersection}
	
	The problematic argument in \cite{forien_stepping_2019} is in Section~3.3, which deals with a pair of independent random random walks following Definition~\ref{def:rw}.
	By Theorem~\ref{thm:functional_clt} and by the Skorokhod representation theorem, for $ \mu $-almost every $ \omega $, there exists a probability space on which two sequences of processes $ (\xi^{1,n}_t, \xi^{2,n}_t)_{t \in [0,T]} $ and two independent Brownian motions $ (X^1_t, X^2_t)_{t \in [0,T]} $ are defined and such that
	\begin{enumerate}[i)]
		\item for all $ n \geq 1 $, $ (\xi^{i,n}_t)_{t \in [0,T]} $ is distributed like $ (\xi_t)_{t \in [0,T]} $, for $ i \in \{1, 2\} $,
		\item $ \xi^{1,n} $ and $ \xi^{2,n} $ are independent for all $ n \geq 1 $,
		\item the sequence of processes $ (\frac{1}{\sqrt{n}} \xi^{i,n}_{nt})_{t\in [0,T]} $ converges to $ (X^i_t)_{t \in [0,T]} $ in $ \sko{\R} $ almost surely, for $ i \in \{1,2\} $.
	\end{enumerate}
	
	For $ n \geq 1 $, we then define
	\begin{align*}
		L_n(t) := \frac{1}{\sqrt{n}} \int_{0}^{nt} \frac{\1{\xi^{1,n}_s = \xi^{2,n}_s}}{N(\omega, \xi^{1,n}_s)} ds.
	\end{align*}
	We then prove the following, whose statement is identical to Lemma~3.4 in \cite{forien_stepping_2019}, but now with the correct value of $ \gamma $.
	
	\begin{lemma} \label{lemma:convergence_local_time}
		As $ n \to \infty $, for $ \mu $-almost every $ \omega \in \Omega $, $ (L_n(t), t \in [0,T]) $ converges in probability to $ ( \gamma L^0_t(X^1-X^2), t \in [0,T]) $ in the Skorokhod topology, where $ L^0_t(X^1-X^2) $ is the local time at zero of $ X^1-X^2 $ and $ \gamma $ is given by
		\begin{equation} \label{gamma_general}
			\gamma = \int_\Omega \frac{\pi(\omega)^2}{N(\omega)} \mu(d\omega).
		\end{equation}
	\end{lemma}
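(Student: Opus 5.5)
The plan is a second-moment argument. We compare $L_n$ with a smoothed occupation functional of the rescaled pair, whose limit is known to approximate the local time. The constant $\gamma$ in \eqref{gamma_general} enters through an ergodic averaging of $\pi^2/N$ along the diagonal.

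\textbf{Heuristic and first moment.} The pair $(\xi^{1,n},\xi^{2,n})$ is a reversible walk on $\Z^2$ with reversible measure $\pi(\omega,x)\pi(\omega,y)$. By \eqref{def:ht}, for starting points $x_1,x_2$,
\begin{equation*}
\Eq{L_n(t)} = \frac{1}{\sqrt n}\int_0^{nt}\sum_{x\in\Z} h^\omega_s(x_1,x)\,h^\omega_s(x_2,x)\,\frac{\pi(\omega,x)^2}{N(\omega,x)}\,ds .
\end{equation*}
Away from $s=0$, Theorem~\ref{thm:local_clt} lets us replace $\sqrt{s}\,h^\omega_s$ by a Gaussian profile varying on scale $\sqrt n$. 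The ergodic theorem applied to the stationary field $\pi^2/N$ then replaces $\pi(\omega,x)^2/N(\omega,x)$ by its mean $\gamma$. We therefore expect $\Eq{L_n(t)}\to \gamma\int_0^t\int_\R G_s(y-x_1)G_s(y-x_2)\,dy\,ds$, which is $\gamma$ times the mean local time of $X^1-X^2$. This identifies $\gamma$.

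\textbf{Comparison functional.} Fix a smooth symmetric mollifier $\phi_\varepsilon$ of width $\varepsilon$ and set
\begin{equation*}
A^\varepsilon_n(t) := \int_0^t \phi_\varepsilon\!\left(\frac{\xi^{1,n}_{ns}-\xi^{2,n}_{ns}}{\sqrt n}\right) ds .
\end{equation*}
By the a.s. convergence in the Skorokhod coupling, $A^\varepsilon_n(t)\to A^\varepsilon(t):=\int_0^t\phi_\varepsilon(X^1_s-X^2_s)\,ds$ almost surely. Moreover $A^\varepsilon(t)\to L^0_t(X^1-X^2)$ in $L^2$ as $\varepsilon\to0$. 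It then suffices to show
\begin{equation*}
\lim_{\varepsilon\to0}\limsup_{n\to\infty}\Eq{\bigl(L_n(t)-\gamma A^\varepsilon_n(t)\bigr)^2}=0
\end{equation*}
for each fixed $t$.

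\textbf{Second moment.} Expand the square and use the Markov property at the first of the two times. Each of the three resulting terms ($L_nL_n$, $L_nA_n$, $A_nA_n$) is a double time integral of products of heat kernels $h^\omega$ of the pair. In it, the diagonal weight $\pi^2/N$ appears once or twice for the $L_n$ factors, and $\phi_\varepsilon$ replaces it for the $A_n$ factors. The treatment splits into two regimes.
\begin{enumerate}[i)]
\item \emph{Short time increments} ($s<\delta n$ for either increment). Lemma~\ref{lemma:bound_h} bounds each kernel by $C s^{-1/2}$. This gives contributions of order $\sqrt\delta$, uniformly in $n$, because $s^{-1/2}$ is integrable.
\item \emph{Remaining region.} Cut $\Z$ into blocks of length $\eta\sqrt n$. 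Lemma~\ref{lemma:continuity_h} shows that $h^\omega_s(x,\cdot)$ changes by at most $C(\eta\sqrt n)^{1/2}s^{-3/4}$ inside a block. This error is $o(s^{-1/2})$ once $s\ge\delta n$ and $\eta\to0$. Within each block, the ergodic theorem (uniformly over blocks in $B(0,R\sqrt n)$, by the usual maximal/a.s. argument) replaces the sum of $\pi^2/N$ by $\gamma\eta\sqrt n$. Theorem~\ref{thm:local_clt} then turns the kernels into Gaussians. The tails $|x|>R\sqrt n$ are handled by Gaussian/Lemma~\ref{lemma:bound_h} bounds together with the tightness from Theorem~\ref{thm:functional_clt}.
\end{enumerate}
In the limit all three terms converge to the same Brownian expression, up to errors vanishing as $\varepsilon,\delta,\eta\to0$. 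For the $L_nL_n$ term this needs the observation that the double diagonal sum over $x$ and $x'$ separates into two independent block averages after conditioning. Hence the $L^2$ distance vanishes.

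\textbf{Conclusion and main obstacle.} The $L^2$ bound gives convergence in probability of $L_n(t)$ to $\gamma L^0_t(X^1-X^2)$ for each fixed $t$, hence for finitely many times. Since the $L_n$ are nondecreasing and the limit is continuous, pointwise convergence in probability on a dense set upgrades to uniform convergence in probability on $[0,T]$, hence in the Skorokhod topology. The main obstacle is the $L_nL_n$ term of the second moment. There the ergodic averaging must be done simultaneously at two diagonal points $x,x'$ whose separation can be small relative to $\sqrt n$. Near $x=x'$ the block replacement fails, and I would control that region by the short-time bound from Lemma~\ref{lemma:bound_h} applied to the second increment. This is presumably the point where the original argument in \cite{forien_stepping_2019} went wrong, so it needs the most care.
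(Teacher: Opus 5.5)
Your proposal is correct but argues differently from the paper. The paper never computes moments of $L_n$ and never uses the local CLT. It writes the $\pi^{-2}$-weighted collision time as the diagonal $\sum_x H^n_t(x,x)$ of the two-point local-time field. It then proves an $L^2$ spatial modulus of continuity for that field (Lemma~\ref{lemma:continuity_H_xy}, using only the kernel bounds of Lemmas~\ref{lemma:bound_h} and~\ref{lemma:continuity_h}). With it, the diagonal is replaced by its $\pi\otimes\pi$-weighted average over a box of side $\delta\sqrt n$. That average is exactly $\frac1n\int_0^{nt}V^n_\delta(\xi^{1,n}_s,\xi^{2,n}_s)\,ds$, which converges by the coupling with $(X^1,X^2)$ alone (Theorem~\ref{thm:intersection_local_time}). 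The weight $1/N$ is then absorbed by a second ergodic replacement along the diagonal, producing $\int_\Omega f\pi^2\,d\mu$ (Lemma~\ref{lemma:cvg_average_diagonal}).

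You instead compare $L_n$ with $\gamma A^\varepsilon_n$ in $L^2$ and identify the limits of the three second-moment terms explicitly, via Theorem~\ref{thm:local_clt} and uniform block averages of $\pi^2/N$. The paper's route uses less input: kernel bounds plus the invariance principle, with no local CLT, no identification of Brownian moments, and no tail bookkeeping for nested kernel sums. It is also modular, giving Theorem~\ref{thm:local_time_1_rw} and arbitrary bounded $f$ by the same scheme. Yours is a heavier but standard method of moments. It also makes explicit the monotonicity argument that upgrades fixed-time convergence to uniform convergence in probability, a step the paper leaves implicit.

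Two remarks on your last paragraph. First, the region $x\approx x'$ is not a separate obstacle. For $u-s\ge\delta n$ the kernel $h^\omega_{u-s}(x,\cdot)$ is H\"older at scale $\sqrt n$ uniformly in $x$, so the block replacement works there too. The only singular region, $u-s<\delta n$, is already $O(\sqrt{\delta t})$ by Lemma~\ref{lemma:bound_h}. Second, the error in \cite{forien_stepping_2019} was not located there. It was the missing factor $\int_\Omega\pi^2\,d\mu$ in the unweighted collision time, which your first-moment formula already captures through $g^\omega_s(x,y)=h^\omega_s(x,y)\,\pi(\omega,y)$.
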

	
	Note that, plugging (2.3) of \cite{forien_stepping_2019} in \eqref{gamma_general}, we recover \eqref{gamma}.
	We prove Lemma~\ref{lemma:convergence_local_time} in Section~\ref{sec:proof} below.
	
	Note that $ L_n(t) $ can be related to $ L^0_n(t) $, the time that the two random walks spend in the same location up to time $ t $ (see Lemma~\ref{lemma:cvg_average_diagonal} below).
	%	\color{blue}
	%	Before doing so, let us briefly explain the flaw in the argument in \cite{forien_stepping_2019}.
	%	For $ t \geq 0 $ and $ n \geq 1 $, set
	%	\begin{align*}
		%		L^0_n(t) := \int_{0}^{t} \1{\xi^{1,n}_s = \xi^{2,n}_s} ds.
		%	\end{align*}
	%	and
	%	\begin{align*}
		%		Y^n(t) := T^{\xi^{1,n}_{(L^{0}_n)^{-1}(t)}}\omega, \quad t \geq 0,.
		%	\end{align*}
	%	We can then write $ L_n(t) $ as
	%	\begin{align*}
		%		L_n(t) = \frac{1}{\sqrt{n}} L^0_n(nt) \frac{1}{L^0_n(nt)} \int_{0}^{L^0_n(nt)} \frac{1}{N(Y^n(s))} ds.
		%	\end{align*}
	%	By Proposition~3.3 in \cite{forien_stepping_2019} and by the pointwise ergodic theorem, for each $ n \geq 1 $,
	%	\begin{align} \label{cvg_avg_Y}
		%		\frac{1}{t} \int_{0}^{t} \frac{1}{N(Y^n(s))} ds \: \cvgas{t} \: \int_\Omega \frac{1}{N(\omega)} \frac{\pi(\omega)^2}{\mean{\pi^2}} \mu(d\omega) \quad \mu(d\omega)-a.s.
		%	\end{align}
	It was then stated in \cite{forien_stepping_2019} that $ (\frac{1}{\sqrt{n}} L^0_n(nt), t \in [0,T]) $ converges almost surely to $ (L^0_t(X^1-X^2), t \in [0,T]) $.
	This statement is false, however.
	In fact, we shall see that
	\begin{equation*}
		\frac{1}{\sqrt{n}} L^0_n(nt) \cvgas{n} \left(\mean{\pi^2}\right) \, L^0_t(X^1-X^2),
	\end{equation*}
	which yields the statement of Lemma~\ref{lemma:convergence_local_time} with $ \gamma $ now given by \eqref{gamma_general}.
	%	\color{black}
	
	We note that inequality \eqref{ineq_gamma_N} holds in general, since, by the Cauchy-Schwarz inequality,
	\begin{equation*}
		1 = \int_\Omega \pi(\omega) \mu(d\omega) \leq \left( \int_\Omega \frac{\pi(\omega)^2}{N(\omega)} \mu(d\omega) \right)^{1/2} \left( \int_\Omega N(\omega) \mu(d\omega) \right)^{1/2}.
	\end{equation*}
	
	\section{Scaling limit of the intersection local time of two independent random walks} \label{sec:proof}
	
	As a warm up to the proof of Lemma~\ref{lemma:convergence_local_time}, we prove a result on the scaling limit of the local time of a single random walk, which we believe is of independent interest.
	
	\subsection{Local time of a single random walk} \label{subsec:single_rw}
	
	For $ x \in \Z $, define $ (L^n_t(x), t \geq 0) $ as
	\begin{equation*}
		L^n_t(x) := \frac{1}{\sqrt{n}} \int_{0}^{nt} \1{\xi^{1,n}_s = x} ds.
	\end{equation*}
	Moreover, for $ x \in \R $, let $ (L_t(x), t \geq 0) $ denote the local time at $ x $ of $ (X^1_t, t \geq 0) $, such that, for any $ x \in \R $,
	\begin{equation*}
		\lim_{\varepsilon \downarrow 0} \frac{1}{2\varepsilon} \int_{0}^{t} \1{\abs{X^1_s - x} \leq \varepsilon} ds \to L_t(x).
	\end{equation*}
	We then prove the following result.
	
	\begin{theorem} \label{thm:local_time_1_rw}
		For any $ T > 0 $ and $ R > 0 $, for $ \mu $-almost every environment $ \omega \in \Omega $,
		\begin{equation*}
			\sup_{t \in [0,T]} \max_{x \in B(0,R \sqrt{n}) \cap \Z} \Eq[0]{ \, \abs{ \frac{L^n_t(x)}{\pi(\omega, x)} - L_t(x/\sqrt{n}) } \, } \cvgas{n} 0.
		\end{equation*}
	\end{theorem}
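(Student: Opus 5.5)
We prove the statement by smoothing the local time over a mesoscopic window, in the spirit of the definition of $L_t(x)$. Fix $\delta>0$ small. For $x\in\Z$ let
\[
I_\delta(x):=\Z\cap[x-\delta\sqrt n,x+\delta\sqrt n].
\]
We decompose
\[
\frac{L^n_t(x)}{\pi(\omega,x)}-L_t(x/\sqrt n)=A_1+A_2+A_3,
\]
where
\[
A_1:=\frac{L^n_t(x)}{\pi(\omega,x)}-\frac{1}{2\delta\sqrt n}\sum_{y\in I_\delta(x)}\frac{L^n_t(y)}{\pi(\omega,y)}\,\pi(\omega,y),
\]
\[
A_2:=\frac{1}{2\delta\sqrt n}\sum_{y\in I_\delta(x)}L^n_t(y)-\frac{1}{2\delta}\int_0^t\1{\abs{X^1_s-x/\sqrt n}\le\delta}ds,
\]
\[
A_3:=\frac{1}{2\delta}\int_0^t\1{\abs{X^1_s-x/\sqrt n}\le\delta}ds-L_t(x/\sqrt n).
\]
Here $\frac1{\sqrt n}\sum_{y\in I_\delta(x)}L^n_t(y)=\int_0^t\1{\abs{\xi^{1,n}_{ns}/\sqrt n-x/\sqrt n}\le\delta}ds$ is an occupation time of the rescaled walk.

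\emph{The term $A_3$.} This is purely Brownian. By joint continuity of Brownian local time and the occupation density formula, $\sup_{t\le T,\abs{z}\le R}\abs{A_3}\to0$ in $L^1$ as $\delta\to0$, uniformly in $n$.

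\emph{The term $A_2$.} For fixed $\delta$, the indicator of an interval is a.s. a continuity point for the occupation functional, since $X^1$ spends zero time at $z\pm\delta$. Using the almost sure Skorokhod convergence of $\xi^{1,n}$ to $X^1$, the difference tends to $0$ for each fixed $z$ and $t$. To get uniformity over $\abs z\le R$ and $t\le T$, we use monotonicity in $t$ and the fact that a shift of $z$ by $\eta$ changes both terms by at most the occupation of two intervals of length $\eta$. By Lemma~\ref{lemma:bound_h} the expected occupation of such an interval is $O(\eta+n^{-1/2})$, since $\int_0^{nt}\frac{1}{\sqrt n}\,g\,ds\lesssim \int_0^{t}(ns)^{-1/2}ds\cdot\#\text{sites}/\sqrt n$. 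Bounded convergence, with all terms bounded in $L^2$ again by Lemma~\ref{lemma:bound_h}, then gives $L^1$ convergence.

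\emph{The term $A_1$.} This is the key step: spatial regularity of $y\mapsto L^n_t(y)/\pi(\omega,y)$ at scale $\delta\sqrt n$, together with the ergodic theorem
\[
\frac{1}{2\delta\sqrt n}\sum_{y\in I_\delta(x)}\pi(\omega,y)\to\int\pi\,d\mu=1
\]
uniformly in $\abs x\le R\sqrt n$. The uniformity holds $\mu$-a.s. by the standard argument: apply the ergodic theorem to both half-lines and take differences. The ergodic theorem lets us replace the weighted average by an unweighted-equivalent one, with an error controlled by $\sup_y \Eq[0]{L^n_t(y)}\le C\sqrt t$ (Lemma~\ref{lemma:bound_h}). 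It then suffices to bound
\[
\Eq[0]{\abs{\frac{L^n_t(x)}{\pi(\omega,x)}-\frac{L^n_t(y)}{\pi(\omega,y)}}}\quad\text{for }\abs{x-y}\le\delta\sqrt n.
\]
I would bound the second moment. Writing $\ell(x)=L^n_t(x)/\pi(\omega,x)$, the Markov property gives
\[
\Eq[0]{(\ell(x)-\ell(y))^2}=\frac{2}{n}\int_0^{nt}\!\!\int_0^{nt-s}\sum_{u,v\in\{x,y\}}\pm\, g_s(0,u)\,\frac{h_r(u,v)}{\pi(\omega,u)}\,dr\,ds,
\]
which reduces to integrals of $g_s(0,u)\,[h_r(u,x)-h_r(u,y)]$ with $u\in\{x,y\}$. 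Lemma~\ref{lemma:continuity_h} gives $\abs{h_r(u,x)-h_r(u,y)}\le C\abs{x-y}^{1/2}r^{-3/4}$. Using this for $r\ge r_0$ and Lemma~\ref{lemma:bound_h} (namely $h_r\le Cr^{-1/2}$) for $r<r_0$, optimized at $r_0=\abs{x-y}^2$, together with $g_s(0,u)\le Cs^{-1/2}$, we obtain a bound of order $\frac1n\cdot\sqrt{nt}\cdot\abs{x-y}\lesssim\delta\sqrt t$. This is uniform in $\omega$ (a.s.), $x$ and $t\le T$. Letting $n\to\infty$ and then $\delta\to0$ concludes the proof.

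The main obstacle is the $A_1$ estimate: making the time integration in the second-moment formula work with only the Hölder-$1/2$ spatial regularity of Lemma~\ref{lemma:continuity_h}, and handling the small-$r$ regime where that lemma is useless. The cutoff argument above is what I would try first.
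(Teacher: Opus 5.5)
Your proposal follows the paper's proof essentially step for step. Your $A_1$ is the paper's $A^n_\delta+B^n_\delta$: uniform ergodic averages of $\pi$ plus the second-moment bound of Lemma~\ref{lemma:H}, which comes from the Markov property and Lemmas~\ref{lemma:bound_h}--\ref{lemma:continuity_h}. Your $A_2$ is $C^n_\delta$ and your $A_3$ is $D_\delta$. The only minor difference is that the paper obtains uniformity in $A_2$ directly on the event $\{\sup_{s\leq T}|\tilde{\xi}^n_s-X^1_s|\leq\varepsilon\}$, rather than through a grid.

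One correction in the $A_1$ step. No cutoff is needed, because the bound $C|x-y|^{1/2}r^{-3/4}$ from Lemma~\ref{lemma:continuity_h} is already integrable at $r=0$. The cutoff also does not give order $\delta\sqrt{t}$: the range $r\geq r_0$ still contributes $C|x-y|^{1/2}\int_{r_0}^{nt}r^{-3/4}\,dr\leq 4C|x-y|^{1/2}(nt)^{1/4}$. So you only get $\Eq[0]{(\ell(x)-\ell(y))^2}\leq C(\delta\sqrt{t}+\delta^{1/2}t^{3/4})$, no better than the paper's $C\delta^{1/2}t^{3/4}$. This bound still vanishes as $\delta\to0$, so your conclusion stands.
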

	
	To prove this result, let us introduce the notation
	\begin{equation*}
		H_t(x) := \int_{0}^{t} \frac{\1{\xi^{1,n}_s = x}}{\pi(\omega,x)} ds.
	\end{equation*}
	
	\begin{lemma} \label{lemma:H}
		There exists $ C > 0 $ such that, for $ \mu $-almost every environment $ \omega \in \Omega $ and for all $ t \geq 0 $,
		\begin{equation*}
			\sup_{x, y \in \Z} \Eq[x]{ H_t(y) } \leq C \sqrt{t},
		\end{equation*}
		and
		\begin{equation*}
			\sup_{x, y, z \in \Z} \frac{\Eq[x]{(H_t(y) - H_t(z))^2}}{|y-z|^{1/2}} \leq C t^{3/4}.
		\end{equation*}
	\end{lemma}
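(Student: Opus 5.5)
The plan is to write both moments of $H_t$ in terms of the transition kernel $h^\omega$ and then apply Lemma~\ref{lemma:bound_h} and Lemma~\ref{lemma:continuity_h}. Throughout, $\xi^{1,n}$ under $\Pq[x]{\cdot}$ is just the walk $\xi$ started at $x$.

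\textbf{First bound.} By Fubini,
\begin{equation*}
	\Eq[x]{H_t(y)} = \int_0^t \frac{g^\omega_s(x,y)}{\pi(\omega,y)}\, ds = \int_0^t h^\omega_s(x,y)\, ds .
\end{equation*}
Lemma~\ref{lemma:bound_h} gives $h^\omega_s(x,y) \le C s^{-1/2}$. Integrating from $0$ to $t$ yields $2C\sqrt{t}$, uniformly in $x,y$, for $\mu$-a.e. $\omega$.

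\textbf{Second bound: reduction to an integral of increments.} Set $f(w) := \1{w=y}/\pi(\omega,y) - \1{w=z}/\pi(\omega,z)$, so that $H_t(y)-H_t(z) = \int_0^t f(\xi_s)\,ds$. Expanding the square, using the symmetry in $(s,u)$ and the Markov property at time $s$, we get
\begin{equation*}
	\Eq[x]{(H_t(y)-H_t(z))^2} = 2\int_0^t \sum_{w\in\{y,z\}} g^\omega_s(x,w) f(w) \int_0^{t-s} \Eq[w]{f(\xi_r)}\,dr\,ds .
\end{equation*}
Note that $g^\omega_s(x,w)|f(w)| = h^\omega_s(x,w)$ for $w \in \{y,z\}$. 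Also, for any starting point $w$,
\begin{equation*}
	\Eq[w]{f(\xi_r)} = h^\omega_r(w,y) - h^\omega_r(w,z).
\end{equation*}

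\textbf{Bounding the increment term (the main step).} Lemma~\ref{lemma:continuity_h} bounds $|h^\omega_r(w,y)-h^\omega_r(w,z)|$ by $C r^{-3/4}|y-z|^{1/2}$. This singularity is not integrable at $r=0$, so I interpolate with the trivial bound $2C r^{-1/2}$ from Lemma~\ref{lemma:bound_h}:
\begin{equation*}
	|h^\omega_r(w,y)-h^\omega_r(w,z)| \le \min\bigl(2C r^{-1/2},\, C r^{-3/4}|y-z|^{1/2}\bigr) \le C'\, r^{-5/8}\,|y-z|^{1/4},
\end{equation*}
where the last step uses the geometric mean of the two bounds. Integrating in $r$ over $[0,t]$ gives $C'' t^{3/8}|y-z|^{1/4}$. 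The remaining factor $\int_0^t \sum_w h^\omega_s(x,w)\,ds$ is at most $C\sqrt{t}$ by the first part. Altogether,
\begin{equation*}
	\Eq[x]{(H_t(y)-H_t(z))^2} \le C\, t^{7/8}\,|y-z|^{1/4}.
\end{equation*}

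\textbf{Matching the exponents in the statement.} The statement asks for $t^{3/4}|y-z|^{1/2}$, so the estimate above has to be converted.

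If $|y-z| \le \sqrt{t}$, then $t^{1/8} \le |y-z|^{1/4}$, so
\begin{equation*}
	t^{7/8}|y-z|^{1/4} = t^{3/4}\,t^{1/8}|y-z|^{1/4} \le t^{3/4}|y-z|^{1/2}.
\end{equation*}

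If $|y-z| > \sqrt{t}$, I drop the increment structure and use the crude bound
\begin{equation*}
	\Eq[x]{(H_t(y)-H_t(z))^2} \le 2\,\Eq[x]{H_t(y)^2} + 2\,\Eq[x]{H_t(z)^2} \le C t .
\end{equation*}
The estimate $\Eq[x]{H_t(y)^2} \le Ct$ follows from the same Markov decomposition combined with the first part of the lemma. Then $t = t^{3/4}\,t^{1/4} \le t^{3/4}|y-z|^{1/2}$.

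The obstacle I anticipated is exactly this mismatch: the kernel increment bound alone has a nonintegrable $r^{-3/4}$ singularity. The interpolation, followed by the case split on $|y-z|$ versus $\sqrt{t}$, resolves it and gives the claimed rate.
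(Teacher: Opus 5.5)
\textbf{There is a genuine gap in your second bound.} Your first bound and your Markov decomposition are correct. The problem is the case $|y-z|\le\sqrt{t}$: there you assert $t^{1/8}\le|y-z|^{1/4}$, but the inequality goes the other way, since $|y-z|\le\sqrt t$ gives $|y-z|^{1/4}\le t^{1/8}$. In that regime you therefore have
\[
t^{7/8}|y-z|^{1/4}=\Bigl(\frac{\sqrt{t}}{|y-z|}\Bigr)^{1/4} t^{3/4}|y-z|^{1/2}\ \ge\ t^{3/4}|y-z|^{1/2}.
\]
So your estimate does not give the lemma there. For example, with $|y-z|=1$ and $t$ large you only obtain $O(t^{7/8})$ instead of $O(t^{3/4})$.

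Both of your bounds, $Ct^{7/8}|y-z|^{1/4}$ and the crude $Ct$, are dominated by $t^{3/4}|y-z|^{1/2}$ only when $|y-z|\ge\sqrt t$. The regime $|y-z|<\sqrt t$ is left uncovered. That is exactly the regime used later in the paper, with $|y-z|\le\delta\sqrt n$ at time $nt$.

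\textbf{The source of the error.} You claim that $r^{-3/4}$ is not integrable at $r=0$. It is integrable, since $3/4<1$:
\[
\int_0^{t-s}r^{-3/4}\,dr=4(t-s)^{1/4}\le 4t^{1/4}.
\]
So no interpolation is needed, and the interpolation is precisely what loses the factor $(\sqrt t/|y-z|)^{1/4}$, which cannot be recovered afterwards.

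\textbf{The fix.} Apply Lemma~\ref{lemma:continuity_h} directly in your own decomposition. This gives
\[
\int_0^{t-s}\bigl|h^\omega_r(w,y)-h^\omega_r(w,z)\bigr|\,dr\le 4C\,t^{1/4}|y-z|^{1/2}.
\]
Combine this with $\int_0^t\sum_{w\in\{y,z\}}h^\omega_s(x,w)\,ds\le 4C\sqrt t$ from the first part. You get $C' t^{3/4}|y-z|^{1/2}$ uniformly in $x,y,z$, with no case split. This is the paper's argument, which bounds $\int_0^t\int_s^t s^{-1/2}(u-s)^{-3/4}\,du\,ds\le 8t^{3/4}$.
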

	
	\begin{proof}
		Recall the definition of $ h^\omega_t(x,y) $ in \eqref{def:ht}.
		The first part follows easily from Lemma~\ref{lemma:bound_h}, writing
		\begin{equation*}
			\Eq[x]{H_t(y)} = \int_{0}^{t} h^\omega_s(x,y) ds \leq \int_{0}^{t} \frac{C}{\sqrt{s}} ds \leq 2C \sqrt{t},
		\end{equation*}
		and noting that the constant $ C $ does not depend either on $ x $ or on $ y $.
		For the second part, we write
		\begin{equation*}
			(H_t(y) - H_t(z))^2 = \int_{0}^{t} \int_{0}^{t} \left( \frac{\1{\xi_s = y}}{\pi(y)} - \frac{\1{\xi_s = z}}{\pi(z)} \right) \left( \frac{\1{\xi_u = y}}{\pi(y)} - \frac{\1{\xi_u = z}}{\pi(z)} \right) du \, ds.
		\end{equation*}
		By symmetry, this is also
		\begin{equation*}
			(H_t(y) - H_t(z))^2 = 2\int_{0}^{t} \int_{s}^{t} \left( \frac{\1{\xi_s = y}}{\pi(y)} - \frac{\1{\xi_s = z}}{\pi(z)} \right) \left( \frac{\1{\xi_u = y}}{\pi(y)} - \frac{\1{\xi_u = z}}{\pi(z)} \right) du \, ds.
		\end{equation*}
		Expanding the product and taking expectations, we obtain by the Markov property,
		\begin{multline} \label{EH2}
			\Eq[x]{(H_t(y) - H_t(z))^2} = 2\int_{0}^{t} \int_{s}^{t} \left\lbrace h^\omega_s(x,y) \left( h^\omega_{u-s}(y,y) - h^\omega_{u-s}(y,z) \right) \right. \\ \left. - h^\omega_s(x,z) \left( h^\omega_{u-s}(z,y) - h^\omega_{u-s}(z,z) \right) \right\rbrace du \, ds.
		\end{multline}
		We can then use Lemma~\ref{lemma:bound_h} and Lemma~\ref{lemma:continuity_h} to obtain, for some constant $ C > 0 $,
		\begin{align*}
			\Eq[x]{(H_t(y) - H_t(z))^2} &\leq C |y-z|^{1/2} \int_{0}^{t} \int_{s}^{t} \frac{1}{\sqrt{s} (u-s)^{3/4}} du \, ds \\
			&= 4 C |y-z|^{1/2} \int_{0}^{t} \frac{(t-s)^{1/4}}{\sqrt{s}} ds \\
			&\leq 8 C t^{3/4} |y-z|^{1/2},
		\end{align*}		
		yielding the second part of the statement.
	\end{proof}
	
	Using Lemma~\ref{lemma:H}, we now prove Theorem~\ref{thm:local_time_1_rw}.
	
	\begin{proof}[Proof of Theorem~\ref{thm:local_time_1_rw}]
		First set
		\begin{equation*}
			H^n_t(x) := \frac{1}{\sqrt{n}} H_{nt}(x) = \frac{L^n_t(x)}{\pi(\omega,x)}.
		\end{equation*}
		Then, inspired by the proof of Theorem~\ref{thm:local_clt} given in \cite{forien_stepping_2019} (itself adapted from \cite{derrien_local_2015}), we write, for $ \delta > 0 $ and $ y \in B(0,R) \cap \frac{1}{\sqrt{n}} \Z $,
		\begin{align*}
			H^n_t(\sqrt{n} y) - L_t(y) &= H^n_t(\sqrt{n} y) \left( 1 - \frac{1}{2 \delta \sqrt{n}} \sum_{z \in B(\sqrt{n} y, \delta \sqrt{n}) \cap \Z} \pi(\omega, z) \right) \\
			&\quad+ \frac{1}{2 \delta \sqrt{n}} \sum_{z \in B(\sqrt{n} y, \delta \sqrt{n}) \cap \Z} \pi(\omega, z) \left( H^n_t(\sqrt{n} y) - H^n_t(z) \right) \\
			&\quad+ \frac{1}{2 \delta n} \int_{0}^{nt} \1{\xi^{1,n}_s \in B(\sqrt{n} y, \delta \sqrt{n})} ds - \frac{1}{2\delta} \int_{0}^{t} \1{X^1_s \in B(y,\delta)} ds \\
			&\quad+ \frac{1}{2\delta} \int_{0}^{t} \1{X^1_s \in B(y, \delta)} ds - L_t(y), \\
			&=: A^n_\delta(t,y) + B^n_\delta(t,y) + C^n_\delta(t,y) + D_\delta(t,y).
		\end{align*}
		where we have used the fact that
		\begin{equation*}
			\frac{1}{2 \delta \sqrt{n}} \sum_{z \in B(\sqrt{n} y, \delta \sqrt{n}) \cap \Z} \pi(\omega, z) H^n_t(z) = \frac{1}{2 \delta n} \int_{0}^{nt} \1{\xi^{1,n}_s \in B(\sqrt{n} y, \delta \sqrt{n})} ds.
		\end{equation*}
		We then bound each term separately.
		
		\paragraph*{Bound on $ A^n_\delta $}
		
		Recalling the notation from \cite{forien_stepping_2019}
		\begin{align} \label{def:Pi}
			\Pi^{n,\delta}(\omega,x) = \frac{1}{2 \delta \sqrt{n}} \sum_{k \in B(\sqrt{n} x, \delta \sqrt{n}) \cap \Z} \pi(\omega,k),
		\end{align}
		By Lemma~\ref{lemma:H},
		\begin{align*}
			\sup_{y \in B(0, R) \cap \frac{1}{\sqrt{n}}\Z} \Eq[0]{ | A^n_\delta(t,y) | } &= \sup_{y \in B(0, R) \cap \frac{1}{\sqrt{n}}\Z} \frac{1}{\sqrt{n}} \Eq[0]{ H_{nt}(\sqrt{n} y) } \abs{ 1 - \Pi^{n,\delta}(\omega,y) } \\
			&\leq C \sqrt{t} \sup_{y \in B(0,R)} \abs{ 1 - \Pi^{n,\delta}(\omega,y) }.
		\end{align*}
		By Lemma~4.3 in \cite{forien_stepping_2019}, the right-hand side converges to 0 as $ n \to \infty $ for $ \mu $-almost every environment $ \omega \in \Omega $, yielding the desired bound on $ A^n_\delta $.
		
		\paragraph*{Bound on $ B^n_\delta $}
		
		By Jensen's inequality,
		\begin{equation*}
			\Eq[0]{| B^n_\delta(t,y) | } \leq \frac{1}{2 \delta \sqrt{n}} \sum_{z \in B(\sqrt{n} y, \delta \sqrt{n}) \cap \Z} \pi(\omega, z) \, \Eq[0]{ \left( H^n_t(\sqrt{n} y) - H^n_t(z) \right)^2}^{1/2}.
		\end{equation*}
		By \eqref{uniform_ellipticity}, $ \pi(\omega, z) \leq K $ and, by Lemma~\ref{lemma:H}, we obtain
		\begin{equation*}
			\sup_{y \in B(0, R) \cap \frac{1}{\sqrt{n}} \Z} \Eq[0]{ | B^n_\delta(t,y) | } \leq C K \frac{1}{\sqrt{n}} (nt)^{3/8} (\delta \sqrt{n})^{1/4} = C K t^{3/8} \delta^{1/4}.
		\end{equation*}
		
		\paragraph*{Bound on $ C^n_\delta $}
		
		By a change of variables,
		\begin{equation*}
			C^n_\delta(t,y) = \frac{1}{2\delta} \int_{0}^{t} \1{\tilde{\xi}^n_s \in B(y, \delta)} ds - \frac{1}{2\delta} \int_{0}^{t} \1{X^1_s \in B(y,\delta)} ds,
		\end{equation*}
		where $ \tilde{\xi}^n_s := \frac{1}{\sqrt{n}} \xi^{1,n}_{ns} $.
		%		\begin{equation} \label{bound_Cn}
			%			\abs{C^n_\delta(t,y)} \leq \frac{1}{2\delta} \int_{0}^{t} \left( \abs{\1{\tilde{\xi}^n_s \leq y + \delta} - \1{X^1_s \leq y + \delta}} + \abs{\1{\tilde{\xi}^n_s \leq y - \delta} - \1{X^1_s \leq y - \delta}} \right) ds.
			%		\end{equation}\color{blue}
		Then, on the event
		\begin{equation*}
			A^n_\varepsilon := \left\lbrace \sup_{s \in [0,T]} | \tilde{\xi}^n_s - X^1_s | \leq \varepsilon \right\rbrace,
		\end{equation*}
		we have
		\begin{equation*}
			\abs{\1{\tilde{\xi}^n_s \in B(y, \delta)} - \1{X^1_s \in B(y,\delta)}} \leq \1{\abs{X^1_s - (y + \delta)} \leq \varepsilon} + \1{\abs{X^1_s - (y-\delta)} \leq \varepsilon}.
		\end{equation*}
		%		for any $ z \in \R $ and $ s \in [0,T] $,
		%		\begin{equation*}
			%			\1{X^1_s \leq z - \varepsilon} \leq \1{\tilde{\xi}^n_s \leq z} \leq \1{X^1_s \leq z + \varepsilon}.
			%		\end{equation*}
		%		Moreover, the same inequality holds trivially with $ \1{X^1_s \leq z} $ in the middle.
		%		As a result,
		%		\begin{align*}
			%			\abs{\1{\tilde{\xi}^n_s \leq z} - \1{X^1_s \leq z}} &\leq \1{X^1_s \leq z + \varepsilon} - \1{X^1_s \leq z - \varepsilon} \\
			%			&= \1{\abs{X^1_s - z} \leq \varepsilon}.
			%		\end{align*}
		%		Plugging this in \eqref{bound_Cn}, we obtain, on $ A^n_\varepsilon $,
		%		\begin{equation*}
			%			\abs{C^n_\delta(t,y)} \leq \frac{1}{2\delta} \int_{0}^{t} \left( \1{\abs{X^1_s - (y + \delta)} \leq \varepsilon} + \1{\abs{X^1_s - (y-\delta)} \leq \varepsilon} \right) ds.
			%		\end{equation*}\color{black}
		We thus obtain
		\begin{multline*}
			\Eq[0]{\abs{C^n_\delta(t,y)}} \leq \frac{1}{2\delta} \int_{0}^{t} \left[ \Pq[0]{\abs{X^1_s - (y+\delta)} \leq \varepsilon} + \Pq[0]{\abs{X^1_s - (y - \delta)} \leq \varepsilon}\right] ds \\ + \frac{t}{2\delta} \Pq[0]{\sup_{s \in [0,T]} | \tilde{\xi}^n_s - X^1_s | > \varepsilon}.
		\end{multline*}
		We then note that
		\begin{equation*}
			\sup_{z \in \R} \int_{0}^{t} \Pq[0]{\abs{X^1_s - z} \leq \varepsilon} ds \leq 2\varepsilon \int_{0}^{t} \frac{ds}{\sqrt{2\pi \sigma^2 s}} = \frac{4 \varepsilon \sqrt{t}}{\sqrt{2 \pi \sigma^2}}.
		\end{equation*}
		Since $ (\tilde{\xi}^n_s, s \in [0,T]) $ converges to $ (X^1_s, s \in [0,T]) $ in probability in $ D([0,T], \R) $ as $ n \to \infty $, and since $ s \mapsto X^1_s $ is almost surely continuous, for any $ \varepsilon > 0 $,
		\begin{equation*}
			\lim_{n \to \infty} \Pq[0]{\sup_{s \in [0,T]} | \tilde{\xi}^n_s - X^1_s | > \varepsilon} = 0.
		\end{equation*}
		Hence
		\begin{equation*}
			\limsup_{n \to \infty} \sup_{t \in [0,T]} \sup_{y \in \R} \Eq[0]{\abs{C^n_\delta(t,y)}} \leq \frac{4 \varepsilon \sqrt{T}}{\delta \sqrt{2 \pi \sigma^2}}.
		\end{equation*}
		Letting $ \varepsilon $ tend to zero, we obtain
		\begin{equation*}
			\lim_{n \to \infty} \sup_{t \in [0,T]} \sup_{y \in \R} \Eq[0]{\abs{C^n_\delta(t,y)}} = 0.
		\end{equation*}
		
		\paragraph*{Bound on $ D_\delta $}
		
		By the occupation density formula for Brownian motion \citep[Corollary~VI.1.6]{revuz_continuous_2013},
		\begin{equation*}
			D_\delta(t,y) = \frac{1}{2\delta} \int_{y - \delta}^{y + \delta} (L_t(z) - L_t(y)) dz.
		\end{equation*}
		By the local uniform continuity of local time \citep[Corollary~VI.1.8]{revuz_continuous_2013},
		\begin{equation*}
			\sup_{t \in [0,T]} \sup_{y \in B(0,R)} \abs{D_\delta(t,y)} \to 0
		\end{equation*}
		almost surely as $ \delta \to 0 $.
		By the dominated convergence theorem, we then obtain
		\begin{equation*}
			\lim_{\delta \to 0} \sup_{t \in [0,T]} \sup_{y \in B(0,R)} \Eq[0]{\abs{D_\delta(t,y)}} = 0.
		\end{equation*}
		
		\paragraph*{Conclusion}
		
		Combining all the above bounds, we obtain that, for any $ T > 0 $, there exists a constant $ C_T > 0 $ such that, for $ \mu $-almost every environment $ \omega \in \Omega $ and for any $ \delta > 0 $,
		\begin{multline*}
			\limsup_{n \to \infty} \sup_{t \in [0,T]} \max_{y \in B(0,R \sqrt{n}) \cap \Z} \Eq[0]{ \, \abs{ \frac{L^n_t(y)}{\pi(\omega, y)} - L_t(y/\sqrt{n}) } \, } \\ \leq C_T \delta^{1/4} + \sup_{t \in [0,T]} \sup_{y \in B(0,R)} \Eq[0]{\abs{D_\delta(t,y)}}.
		\end{multline*}
		Letting $ \delta $ tend to zero, we obtain the statement of Theorem~\ref{thm:local_time_1_rw}.
	\end{proof}
	
	\subsection{Intersection local time of two random walks} \label{subsec:proof_2_rw}
	
	We now prove Lemma~\ref{lemma:convergence_local_time}, adapting the ideas of the proof of Theorem~\ref{thm:local_time_1_rw} to the setting of two independent random walks.
	We start by setting, for $ x, y \in \Z $,
	\begin{equation*}
		H_t(x, y) := \int_{0}^{t} \frac{\1{\xi^{1,n}_s = x, \xi^{2,n}_s = y}}{\pi(\omega,x) \pi(\omega,y)} ds.
	\end{equation*}
	We then prove the following.
	
	\begin{lemma} \label{lemma:continuity_H_xy}
		For any measurable bounded function $ f : \Omega^3 \to \R $, there exists a constant $ C > 0 $ such that, for $ \mu $-almost every environment $ \omega \in \Omega $, any bounded set $ A \subset \R^2 $ and for all $ t \geq 0 $,
		\begin{multline*}
			\Eq{\left( \sum_{x \in \Z}  \frac{1}{| A \cap \Z^2|} \sum_{(y,z) \in A \cap \Z^2} f(T^x \omega, T^{x+y}\omega, T^{x+z} \omega) (H_t(x+y,x+z) - H_t(x,x)) \right)^2} \\ \leq C t^{3/4} \sup_{(y, z) \in A} (|y|^{1/2} + |z|^{1/2}).
		\end{multline*}
	\end{lemma}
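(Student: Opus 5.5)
Write $F_t := \sum_{x} \frac{1}{|A\cap\Z^2|}\sum_{(y,z)} f_{x,y,z}\,(H_t(x+y,x+z)-H_t(x,x))$, with $f_{x,y,z} := f(T^x\omega,T^{x+y}\omega,T^{x+z}\omega)$. The plan mirrors the proof of the second part of Lemma~\ref{lemma:H}, now with the product walk $(\xi^1,\xi^2)$ on $\Z^2$. This walk is reversible with respect to $\pi(\omega,x)\pi(\omega,y)$, and its normalized kernel factorizes as $h^\omega_t(x,x')h^\omega_t(y,y')$. I expand $F_t^2$ as a double time integral over $s<u$, symmetrize to get a factor $2$, and apply the Markov property at time $s$. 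Writing $\bm{h}_s(a,b;p,q) := h^\omega_s(a,p)h^\omega_s(b,q)$, this gives
\begin{multline*}
\Eq[(a,b)]{F_t^2} = 2\int_0^t\int_s^t \sum_{x,x'}\frac{1}{|A\cap\Z^2|^2}\sum_{(y,z),(y',z')} f_{x,y,z}f_{x',y',z'} \\
\times\bigl[\bm h_s(a,b;x+y,x+z)-\bm h_s(a,b;x,x)\bigr]\,\Phi_{u-s}(x+y,x+z;x',y',z')\,du\,ds,
\end{multline*}
where $\Phi_r(p,q;x',y',z') := \bm h_r(p,q;x'+y',x'+z') - \bm h_r(p,q;x',x')$.

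The key point is that one of the two brackets must supply the factor $r^{-3/4}|y|^{1/2}$-type decay, while the other must be summable in $x$. For the inner factor $\Phi_{u-s}$, I use Lemma~\ref{lemma:continuity_h} in one coordinate and Lemma~\ref{lemma:bound_h} in the other:
\[
|\Phi_r| \le h_r(p,x'+y')\,|h_r(q,x'+z')-h_r(q,x')| + h_r(q,x')\,|h_r(p,x'+y')-h_r(p,x')|.
\]
Summing over $x'$ then removes one factor, using $\sum_{x'} h_r(p,x'+y') \le K$ since $\sum_w g_r(p,w)=1$. It leaves $C\,r^{-3/4}(|y'|^{1/2}+|z'|^{1/2})$. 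The outer bracket at time $s$ is handled crudely: I bound it by the sum of the two kernels and sum over $x$. Since the two walks are independent, $\sum_x \bm h_s(a,b;x+y,x+z) = \sum_x h_s(a,x+y)h_s(b,x+z) \le C s^{-1/2}$ by Lemma~\ref{lemma:bound_h} applied to one factor and summability of the other. The function $f$ is simply bounded by $\|f\|_\infty$, and the averages over $A$ have total mass $1$. The result is
\[
\Eq{F_t^2} \le C\|f\|_\infty^2 \sup_{(y,z)\in A}(|y|^{1/2}+|z|^{1/2}) \int_0^t\int_s^t s^{-1/2}(u-s)^{-3/4}\,du\,ds,
\]
and the double integral is bounded by $C t^{3/4}$, exactly as in Lemma~\ref{lemma:H}.

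The step I expect to be the main obstacle is the interchange of the $x'$-sum with the increment structure. The bound on $\Phi$ must be summable in $x'$ uniformly, and this requires keeping one factor as a genuine probability kernel (mass at most $K$) rather than bounding it pointwise. This is why I split the difference as above instead of applying a two-dimensional Hölder estimate. If summability in $x'$ fails for the term where the difference falls on the second coordinate, I would first try summing over $x'$ in that coordinate instead. The terms where $f$ is evaluated at random-environment points need no ergodic input here, since only $\|f\|_\infty$ is used. The constant $C$ depends on $f$ only through $\|f\|_\infty$ and on the constants of Lemmas~\ref{lemma:bound_h} and \ref{lemma:continuity_h}, which are uniform in $\mu$-almost every $\omega$, in the starting point, and in $A$.
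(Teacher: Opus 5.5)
Your proposal is correct and follows essentially the paper's argument. You expand the square, apply the Markov property at the earlier time, and telescope the later-time difference of products into a kernel times a H\"older increment (Lemma~\ref{lemma:continuity_h}). You then bound one earlier-time factor by $C s^{-1/2}$ (Lemma~\ref{lemma:bound_h}) and remove the remaining kernels by summing over $x'$ and then $x$ with $\sum_{w} h^\omega_r(p,w) \leq K$.

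One harmless slip: in your displayed identity, the term coming from $H_t(x,x)$ should be $h^\omega_s(a,x)h^\omega_s(b,x)\,\Phi_{u-s}(x,x;x',y',z')$, not a product with $\Phi_{u-s}(x+y,x+z;x',y',z')$. This changes nothing, because your bound on $\sum_{x'}|\Phi_r(p,q;x',y',z')|$ is uniform in the starting point $(p,q)$ and you bound the outer bracket crudely anyway.
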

	
	\begin{proof}
		In the following, we write
		\begin{equation*}
			f(\omega, x, y, z) = f(T^x \omega, T^{x+y}\omega, T^{x+z} \omega).
		\end{equation*}
		We start by using Fubini's theorem, using the fact that, for any $ (y,z) \in A \cap \Z^2 \cup \lbrace (0,0) \rbrace $,
		\begin{equation*}
			\sum_{x \in \Z} H_t(x+y,x+z) \leq K^2 \int_{0}^{t} \sum_{x \in \Z} \1{\xi^{1,n}_s = x + y} ds = t K^2,
		\end{equation*}
		almost surely, to write
		\begin{multline} \label{Ef_HH}
			\Eq{\left( \sum_{x \in \Z} \frac{1}{| A \cap \Z^2|} \sum_{(y,z) \in A \cap \Z^2} f(\omega, x, y, z) (H_t(x+y,x+z) - H_t(x,x)) \right)^2} \\ = \frac{1}{| A \cap \Z^2 |^2} \sum_{x, x' \in \Z} \sum_{\underset{(y',z') \in A \cap \Z^2}{(y,z) \in A \cap \Z^2}} f(\omega, x, y, z) f(\omega, x', y', z') \\ \times  \Eq{ (H_t(x+y,x+z) - H_t(x,x)) (H_t(x'+y',x'+z') - H_t(x',x')) }.
		\end{multline}
		The expectation on the right-hand side can be computed exactly as in \eqref{EH2}, using the fact that, for $ 0 \leq s \leq u \leq t $,
		\begin{equation*}
			\Eq{ \frac{\1{\xi^{1,n}_s = x, \xi^{2,n}_s = y} \1{\xi^{1,n}_u = x', \xi^{2,n}_u = y'}}{\pi(\omega,x)\pi(\omega,y)\pi(\omega,x')\pi(\omega,y')} } = h^\omega_s(\xi^{1,n}_0, x) h^\omega_s(\xi^{2,n}_0, y) h^\omega_{u-s}(x,x') h^\omega_{u-s}(y,y').
		\end{equation*}
		We then obtain that the expectation on the right-hand side of \eqref{Ef_HH} can be written as the sum of four terms of the form
		\begin{equation*}
			2 \!\! \int_{0}^{t} \!\!\! \int_{s}^{t} \!\!\! h^\omega_s(\xi^{i,n}_0\!,x+w) h^\omega_s(\xi^{j,n}_0\!,x+v) h^\omega_{u-s}(x+w,x'+w') (h^\omega_{u-s}(x+v,x'+v') - h^\omega_{u-s}(x+v,x')) du ds,
		\end{equation*}
		where either $ (w,v) \in A $ or $ (v,w) \in A $ or $ (v,w) = (0,0) $ and either $ (w',v') \in A $ or $ (w',v') = (0,y) $ for some $ (y,z) \in A $.
		%		We can again use a symmetry argument to obtain that the expectation in the above sum can be replaced by
		%		\begin{multline*}
			%			2 \int_{0}^{t} \int_{s}^{t} \left\lbrace h^\omega_s(\xi^{1,n}_0, x+y) h^\omega_s(\xi^{2,n}_0, x+z) \left( h^\omega_{u-s}(x+y,x'+y') h^\omega_{u-s}(x+z,x'+z') \right.\right. \\ \left. \left. - h^\omega_{u-s}(x+y,x') h^\omega_{u-s}(x+z,x') \right) \right.\\ \left. - h^\omega_s(\xi^{1,n}_0, x) h^\omega_s(\xi^{2,n}_0, x)) \left( h^\omega_{u-s}(x, x'+y') h^\omega_{u-s}(x,x'+z') - h^\omega_{u-s}(x,x') h^\omega_{u-s}(x,x') \right) \right\rbrace du ds.
			%		\end{multline*}
		By Lemma~\ref{lemma:bound_h} and Lemma~\ref{lemma:continuity_h}, there exists a constant $ C > 0 $ such that, for $ \mu $-almost every $ \omega \in \Omega $, this is bounded from above by
		\begin{equation*}
			2 C \int_{0}^{t} \int_{s}^{t} h^\omega_s(\xi^{i,n}_0,x+w) \frac{1}{\sqrt{s}} h^\omega_{u-s}(x+w,x'+w') \frac{|v'|^{1/2}}{(u-s)^{3/4}} du ds.
			%			&\left.+ h^\omega_s(\xi^{1,n}_0, x+y) \frac{1}{\sqrt{s}} h^\omega_{u-s}(x+z,x') \frac{|y'|^{1/2}}{(u-s)^{3/4}} \right. \\ 
			%			&\left.+ h^\omega_s(\xi^{1,n}_0, x) \frac{1}{\sqrt{s}} h^\omega_{u-s}(x,x'+y') \frac{|z'|^{1/2}}{(u-s)^{3/4}} \right. \\ 
			%			&\left.+ h^\omega_s(\xi^{1,n}_0, x) \frac{1}{\sqrt{s}} h^\omega_{u-s}(x,x') \frac{|y'|^{1/2}}{(u-s)^{3/4}} \right\rbrace du ds,
		\end{equation*}
		%		for $ \mu $-almost every $ \omega \in \Omega $, for some constant $ C > 0 $.
		We can then use $ |v'|^{1/2} \leq \sup_{(y,z) \in A} (|y|^{1/2}+|z|^{1/2}) $ and the fact that, by \eqref{uniform_ellipticity},
		\begin{equation} \label{sum_h}
			\sum_{x \in \Z} h^\omega_s(x_0,x) \leq K \sum_{x \in \Z} \Pq[x_0]{\xi_s = x} = K,
		\end{equation}
		to obtain that there exists a constant $ C > 0 $ such that, 
		\begin{multline*}
			\Eq{\left( \sum_{x \in \Z} f(T^x \omega) \frac{1}{| A \cap \Z^2|} \sum_{(y,z) \in A \cap \Z^2} (H_t(x+y,x+z) - H_t(x,x)) \right)^2} \\ \leq C \left( \sup_{(y,z) \in A} (|y|^{1/2} + |z|^{1/2}) \right) \int_{0}^{t} \frac{1}{\sqrt{s}} \int_{s}^{t} \frac{1}{(u-s)^{3/4}} du ds,
		\end{multline*}
		%		We then compute
		%		\begin{align*}
			%			\int_{0}^{t} \frac{1}{\sqrt{s}} \int_{s}^{t} \frac{1}{(u-s)^{3/4}} du ds &= 4 \int_{0}^{t} \frac{(t-s)^{1/4}}{\sqrt{s}} ds \\
			%			&\leq 8 t^{3/4},
			%		\end{align*}
		%		which concludes the proof of the lemma.\color{black}
		from which the statement easily follows.
	\end{proof}
	
	With the help of Lemma~\ref{lemma:continuity_H_xy} we prove the following.
	Recall that $ L^0_t(X^1-X^2) $ denotes the intersection local time of $ X^1 $ and $ X^2 $, i.e. the local time at 0 of $ X^1 - X^2 $.
	
	\begin{theorem} \label{thm:intersection_local_time}
		For any $ T > 0 $, for $ \mu $-almost every environment $ \omega \in \Omega $,
		\begin{equation*}
			\sup_{t \in [0,T]} \Eq{ \, \abs{ \frac{1}{\sqrt{n}} \int_{0}^{nt} \frac{\1{\xi^{1,n}_s = \xi^{2,n}_s}}{\pi(\xi^{1,n}_s)^2} ds - L^0_t(X^1-X^2) } \, } \cvgas{n} 0.
		\end{equation*}
	\end{theorem}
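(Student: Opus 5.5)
The plan is to mimic the four-term decomposition used in the proof of Theorem~\ref{thm:local_time_1_rw}, now for the pair $(\xi^{1,n},\xi^{2,n})$. Lemma~\ref{lemma:continuity_H_xy} replaces Lemma~\ref{lemma:H}. Write
\begin{equation*}
	\mathcal{H}^n_t := \frac{1}{\sqrt{n}} \sum_{x\in\Z} H_{nt}(x,x) = \frac{1}{\sqrt{n}} \int_0^{nt} \frac{\1{\xi^{1,n}_s=\xi^{2,n}_s}}{\pi(\xi^{1,n}_s)^2} ds .
\end{equation*}
For $\delta>0$, let $A_\delta := \lbrace (y,z) \in \Z^2 : |y-z| \leq \delta\sqrt{n},\ 0\le y< \delta\sqrt n\rbrace$, a strip of width of order $\delta\sqrt n$ around the diagonal, with one coordinate restricted so that the set is finite. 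Using the identity
\begin{equation*}
	\sum_{x}\sum_{(y,z)\in A_\delta} \pi(x+y)\pi(x+z) H_{nt}(x+y,x+z) = \int_0^{nt}\1{|\xi^{1,n}_s-\xi^{2,n}_s|\le \delta\sqrt n}ds,
\end{equation*}
the counterpart of the identity used for $C^n_\delta$, I decompose $\mathcal{H}^n_t - L^0_t(X^1-X^2)$ as $A+B+C+D$.

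\emph{Term $A$.} This is $\frac{1}{\sqrt n}\sum_x H_{nt}(x,x)\bigl(1-\frac{1}{2\delta^2 n}\sum_{(y,z)\in A_\delta}\pi(x+y)\pi(x+z)\bigr)$, with the normalisation $|A_\delta|\approx 2\delta^2 n$. Since the walks stay in $B(0,R\sqrt n)$ with high probability (the remaining contribution is controlled by $\mathcal H^n_T$ having bounded mean, obtained by summing Lemma~\ref{lemma:bound_h} twice together with \eqref{sum_h}), it suffices that the bracket is uniformly small for $|x|\le R\sqrt n$. That is a two-dimensional ergodic average of $\pi(x+y)\pi(x+z)$ over the strip. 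Its limit is $(\int\pi\,d\mu)^2=1$ and not $\int\pi^2 d\mu$, because for most pairs $(y,z)$ in the strip $|y-z|$ is large and the product decorrelates. Concretely, I write the average as an average over $y$ in a window of $\pi(x+y)$ times a window average of $\pi$ near $x+y$. Each window average converges uniformly to $1$ by Lemma~4.3 in \cite{forien_stepping_2019}, applied twice.

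\emph{Term $B$.} This is the continuity correction $\frac{1}{\sqrt n}\frac{1}{|A_\delta|}\sum_x\sum_{(y,z)\in A_\delta}\pi(x+y)\pi(x+z)(H_{nt}(x,x)-H_{nt}(x+y,x+z))$. It is exactly of the form treated in Lemma~\ref{lemma:continuity_H_xy}, with $f(\omega_0,\omega_1,\omega_2)=\pi(\omega_1)\pi(\omega_2)$. By Cauchy--Schwarz its $L^1$ norm is at most $\frac{1}{\sqrt n}\bigl(C (nt)^{3/4}(\delta\sqrt n)^{1/2}\bigr)^{1/2}=C t^{3/8}\delta^{1/4}$.

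\emph{Terms $C$ and $D$.} Term $C$ compares $\frac{1}{2\delta n}\int_0^{nt}\1{|\xi^{1,n}_s-\xi^{2,n}_s|\le\delta\sqrt n}ds$ with $\frac{1}{2\delta}\int_0^t\1{|X^1_s-X^2_s|\le\delta}ds$. I treat it exactly as before, using the almost sure convergence of $(\xi^{1,n},\xi^{2,n})/\sqrt n$ and the bound $\sup_z\int_0^t\mathbb P(|X^1_s-X^2_s-z|\le\varepsilon)ds\le C\varepsilon\sqrt t$. Term $D$ vanishes as $\delta\to0$ by the occupation density formula and the continuity of the local time of the Brownian motion $X^1-X^2$.

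The main obstacle I expect is term $A$, together with the bookkeeping needed so that the normalisations of the strip average give the constant $1$ in front of $L^0_t$. Two points need care here. First, the uniform ergodic statement must hold over a two-parameter family of windows. Second, the use of Lemma~\ref{lemma:continuity_H_xy} must be justified when $A_\delta$ depends on $n$. For the latter I check that the constant in that lemma does not depend on $A$; it does not, since only $\sup_A |v'|^{1/2}\le C(\delta\sqrt n)^{1/2}$ enters. Uniformity in $t\in[0,T]$ follows because every bound above is monotone in $t$.
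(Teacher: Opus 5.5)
Your argument is correct and is essentially the paper's proof. It uses the same four-term split, with Lemma~\ref{lemma:continuity_H_xy} for $B$, Lemma~4.3 of \cite{forien_stepping_2019} plus a tail estimate for $A$, convergence of the rescaled walks for $C$, and continuity of Brownian local time for $D$. The only difference is the averaging window. The paper averages over the square $B(0,\delta\sqrt{n})^2$, so the bracket in $A$ factorises exactly as $1-\Pi^{n,\delta}(\omega,x)^2$ and $C$ involves the triangular kernel $\frac{1}{4\delta^2}(2\delta-|x_1-x_2|)^+$. Your strip gives the indicator kernel $\frac{1}{2\delta}\mathbf{1}_{\{|x_1-x_2|\le\delta\}}$, at the price of an iterated, non-factorising ergodic average in $A$.

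Two small points need tightening. First, your displayed identity is missing the factor $\#\{y\in\mathbb{Z}: 0\le y<\delta\sqrt{n}\}\approx\delta\sqrt{n}$ on the right; your normalisation $\frac{1}{2\delta n}$ in $C$ silently restores it. Second, for the tail part of $A$, a bounded mean for $\mathcal{H}^n_T$ plus a high probability of staying in $B(0,R\sqrt{n})$ is not enough on its own. Run your mean computation on the restricted sum instead, as the paper does:
\[
\mathbb{E}^\omega\Bigl[\frac{1}{\sqrt{n}}\sum_{|x|>R\sqrt{n}}H_{nt}(x,x)\Bigr]\le C\int_0^t s^{-1/2}\,\mathbb{P}\bigl(|\xi^{2,n}_{ns}|>R\sqrt{n}\bigr)\,ds .
\]
Alternatively, add a second-moment bound on $\mathcal{H}^n_T$ to get uniform integrability.
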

	
	Compare this with what was wrongly stated in \cite{forien_stepping_2019}, namely that
	\begin{equation*}
		\frac{1}{\sqrt{n}} \int_{0}^{nt} \1{\xi^{1,n}_s = \xi^{2,n}_s} ds \to L^0_t(X^1-X^2).
	\end{equation*}
	
	\begin{proof}
		First set
		\begin{equation*}
			H^n_t(x,y) := \frac{1}{\sqrt{n}} H_{nt}(x,y).
		\end{equation*}
		Then
		\begin{equation*}
			\frac{1}{\sqrt{n}} \int_{0}^{nt} \frac{\1{\xi^{1,n}_s = \xi^{2,n}_s}}{\pi(\xi^{1,n}_s)^2} ds = \sum_{x \in \Z} H^n_t(x,x).
		\end{equation*}
		We then proceed similarly to the proof of Theorem~\ref{thm:local_time_1_rw}.
		For $ z_1, z_2 \in \Z $ and $ \delta > 0 $, set
		\begin{equation*}
			V^n_\delta(z_1, z_2) := \frac{1}{4 \delta^2 \sqrt{n}} \sum_{x \in \Z} \1{|x - z_1| \leq \delta \sqrt{n}, |x-z_2| \leq \delta \sqrt{n}}.
		\end{equation*}
		We then write, for $ \delta > 0 $,
		\begin{multline*}
			\sum_{x \in \Z} H^n_t(x,x) - L^0_t(X^1 - X^2) \\
			\begin{aligned}
				&= \sum_{x \in \Z} H^n_t(x,x) \left( 1 - \frac{1}{(2\delta \sqrt{n})^2} \sum_{y_1, y_2 \in B(0,\delta \sqrt{n}) \cap \Z} \pi(x + y_1) \pi(x + y_2) \right) \\
				&\quad + \sum_{x \in \Z} \frac{1}{(2\delta \sqrt{n})^2} \sum_{y_1, y_2 \in B(0,\delta \sqrt{n}) \cap \Z}  \pi(x + y_1) \pi(x + y_2) \left( H^n_t(x,x) - H^n_t(x+y_1,x+y_2) \right) \\
				&\quad + \frac{1}{n} \int_{0}^{nt} V^n_\delta(\xi^{1,n}_s, \xi^{2,n}_s) ds - \int_{0}^{t} \frac{1}{4 \delta^2} (2\delta - |X^1_s - X^2_s|)^+ ds \\
				&\quad + \int_{0}^{t} \frac{1}{4 \delta^2} (2\delta - |X^1_s - X^2_s|)^+ ds - L^0_t(X^1 - X^2) \\
				&=: A^n_\delta(t) + B^n_\delta(t) + C^n_\delta(t) + D_\delta(t),
			\end{aligned}
		\end{multline*}
		where we have used the fact that
		\begin{equation*}
			\sum_{x \in \Z} \frac{1}{(2\delta \sqrt{n})^2} \sum_{y_1, y_2 \in B(0,\delta \sqrt{n}) \cap \Z}  \pi(x + y_1) \pi(x + y_2) H^n_t(x+y_1,x+y_2) = \frac{1}{n} \int_{0}^{nt} V^n_\delta(\xi^{1,n}_s, \xi^{2,n}_s) ds.
		\end{equation*}
		We then bound each term separately, adapting the arguments of the proof of Theorem~\ref{thm:local_time_1_rw}.
		
		\paragraph*{Bound on $ A^n_\delta $}
		
		Recalling \eqref{def:Pi},
		\begin{equation*}
			A^n_\delta(t) = \sum_{x \in \frac{1}{\sqrt{n}} \Z} H^n_t(\sqrt{n} x, \sqrt{n} x) \left( 1 - \Pi^{n,\delta}(\omega, x)^2 \right).
		\end{equation*}
		Then, for any compact set $ \mathcal{C} \subset \R $, using \eqref{uniform_ellipticity} in the second term on the right,
		\begin{equation} \label{bound_A}
			|A^n_\delta(t)| \leq \sup_{x \in \mathcal{C}} | 1-\Pi^{n,\delta}(\omega,x)^2 | \sum_{x \in \Z} H^n_t(x, x) + (1+K^2) \sum_{x \in \frac{1}{\sqrt{n}} \Z \cap \mathcal{C}^c} H^n_t(\sqrt{n} x, \sqrt{n} x).
		\end{equation}
		Combining Lemma~\ref{lemma:bound_h} and the argument used in \eqref{sum_h}, we obtain, for $ A \subset \R $ and all $ t \geq 0 $,
		%		For a set $ A \subset \R $, by Lemma~\ref{lemma:bound_h}, we obtain
		%		\begin{equation*}
			%			\Eq{ \sum_{x \in \frac{1}{\sqrt{n}} \Z \cap A} H^n_t(\sqrt{n}x, \sqrt{n}x)} \leq \sum_{x \in \frac{1}{\sqrt{n}} \Z \cap A} \frac{1}{\sqrt{n}} \int_{0}^{nt} \frac{C}{\sqrt{s}} h^\omega_s(\xi^{2,n}_0, \sqrt{n} x) ds.
			%		\end{equation*}
		%		By the same argument as in \eqref{sum_h},
		%		\begin{equation*}
			%			\sum_{x \in \frac{1}{\sqrt{n}} \Z \cap A} h^\omega_s(\xi^{2,n}_0, \sqrt{n} x) \leq K \Pq[\xi^{2,n}_0]{\xi_s \in \sqrt{n} A}.
			%		\end{equation*}
		%		By a change of variables, we obtain
		\begin{equation*}
			\Eq{ \sum_{x \in \frac{1}{\sqrt{n}} \Z \cap A} H^n_t(\sqrt{n}x, \sqrt{n}x)} \leq C \int_{0}^{t} \frac{1}{\sqrt{s}} \Pq{\tilde{\xi}^{2,n}_s \in A} ds,
		\end{equation*}
		where $ \tilde{\xi}^{n,i}_s := \frac{1}{\sqrt{n}} \xi^{n,i}_{ns} $.
		%		Coming back to \eqref{bound_A}, we obtain, using \eqref{uniform_ellipticity} to bound $ \sup_{x \in \R}  | 1 - \Pi^{n,\delta}(\omega,x) | $,
		%		\begin{equation*}
			%			\Eq[\xi^{1,n}_0,\xi^{2,n}_0]{| A^n_\delta(t)|} \leq C \sqrt{t} \left( \sup_{x \in \mathcal{C}} | 1 - \Pi^{n,\delta}(\omega,x) | + \sup_{s \in [0,t]} \Pq{\tilde{\xi}^{2,n}_s \notin \mathcal{C}} \right).
			%		\end{equation*}
		Moreover, by Lemma~4.3 in \cite{forien_stepping_2019}, the first term on the right of \eqref{bound_A} vanishes as $ n \to \infty $ for any compact $ \mathcal{C} $ and, by the tightness of $ (\tilde{\xi}^{2,n}, n \geq 1) $, for any $ \varepsilon > 0 $, there exists a compact set $ \mathcal{C} $ such that
		\begin{equation*}
			\sup_{n \geq 1} \sup_{s \in [0,t]} \Pq{\tilde{\xi}^{2,n}_s \notin \mathcal{C}} \leq \varepsilon.
		\end{equation*}
		This shows that
		\begin{equation*}
			\lim_{n \to \infty} \Eq{| A^n_\delta(t)|} = 0.
		\end{equation*}
		
		\paragraph*{Bound on $ B^n_\delta $}
		
		For the bound on $ B^n_\delta $, we apply Lemma~\ref{lemma:continuity_H_xy} with $ A = B(0,\delta \sqrt{n})^2 $ and $ f(\omega, x, y, z) = \pi(\omega,x + y) \pi(\omega,x + z) $ to obtain
		\begin{equation*}
			\Eq{ | B^n_\delta(t) |^2 } \leq \frac{C}{n} (nt)^{3/4} (\delta \sqrt{n})^{1/2} = C t^{3/4} \delta^{1/2}.
		\end{equation*}
		
		\paragraph*{Bound on $ C^n_\delta $}
		
		By a change of variables,
		\begin{equation*}
			\frac{1}{n} \int_{0}^{nt} V^n_\delta(\xi^{1,n}_s, \xi^{2,n}_s) ds = \int_{0}^{t} V^n_\delta(\sqrt{n} \tilde{\xi}^{n,1}_s, \sqrt{n} \tilde{\xi}^{n,2}_s) ds.
		\end{equation*}
		In addition, we note that, as $ n \to \infty $, $ V^n_\delta(\sqrt{n} x_1, \sqrt{n} x_2) $ converges to $ V_\delta(x_1,x_2) $ uniformly over $ x_1, x_2 \in \R $, where
		%		We then note that, for $ z_1, z_2 \in \R $ and $ \delta > 0 $,
		%		\begin{equation*}
			%			B(z_1,\delta) \cap B(z_2, \delta) = B\left( \frac{z_1+z_2}{2}, \left( \delta - \frac{|z_1-z_2|}{2} \right)^+ \right).
			%		\end{equation*}
		%		Then, for $ x_1, x_2 \in \R $,
		%		\begin{equation*}
			%			V^n_\delta(\sqrt{n} x_1, \sqrt{n} x_2) = \frac{1}{4\delta^2 \sqrt{n}} \abs{ B\left( \sqrt{n} \frac{x_1 + x_2}{2}, \sqrt{n} \left( \delta - \frac{|x_1-x_2|}{2} \right)^+ \right) \cap \Z }.
			%		\end{equation*}
		%		We then conclude that this quantity converges uniformly over $ x_1, x_2 \in \R $ to
		\begin{equation*}
			V_\delta(x_1, x_2) := \frac{1}{4\delta^2} \left( 2 \delta - |x_1-x_2| \right)^+.
		\end{equation*}
		Combined with the fact that $ (\tilde{\xi}^{n,i}_t, t \in [0,T]) $ converges in probability to  $ (X^i_t, t \in [0,T]) $, this shows that $ C^n_\delta(t) \to 0 $ as $ n \to \infty $ in probability.
		Since $ C^n_\delta(t) $ is also bounded from above by a deterministic constant (also independent of $ t \in [0,T] $), this shows that
		\begin{equation*}
			\sup_{t \in [0,T]} \Eq{| C^n_\delta(t) |} \to 0
		\end{equation*}
		as $ n \to \infty $ for $ \mu $-almost every $ \omega \in \Omega $.
		
		\paragraph*{Bound on $ D_\delta $}
		
		For the last term, by similar arguments as in Section~\ref{subsec:single_rw}, we have
		%		 we write
		%		\begin{equation*}
			%			\int_{0}^{t} \frac{1}{4 \delta^2} (2\delta - |X^1_s - X^2_s|)^+ ds = \int_{-2\delta}^{2\delta} \frac{1}{4 \delta^2} (2\delta - |x|)^+ L^x_t(X^1-X^2) dx.
			%		\end{equation*}
		%		We then note that
		%		\begin{equation*}
			%			\int_{-2\delta}^{2\delta} \frac{1}{4 \delta^2} (2\delta - |x|)^+ dx = 2 \int_{0}^{2 \delta} \frac{1}{4 \delta^2} (2\delta - x) dx = 1.
			%		\end{equation*}
		%		It follows that
		\begin{equation*}
			D_\delta(t) = \int_{-2\delta}^{2\delta} \frac{1}{4 \delta^2} (2\delta - |x|)^+ \left[L^x_t(X^1-X^2) - L^0_t(X^1-X^2)\right] dx.
		\end{equation*}
		We can then conclude again, using Corollary~VI.1.8 of \cite{revuz_continuous_2013}, that
		\begin{equation*}
			\lim_{\delta \to 0} \sup_{t \in [0,T]} \Eq{|D_\delta(t)|} = 0.
		\end{equation*}
		
		\paragraph*{Conclusion}
		
		Combining the above bounds, we obtain that there exists a constant $ C > 0 $ such that, for any $ \delta > 0 $, $ t \in [0,T] $ and $ \mu $-almost every $ \omega \in \Omega $,
		\begin{equation*}
			\limsup_{n \to \infty} \sup_{t \in [0,T]} \Eq{ \, \abs{ \sum_{x \in \Z} H^n_t(x,x) - L^0_t(X^1-X^2) } \, } \leq C \delta^{1/4} + \sup_{t \in [0,T]} \Eq{|D_\delta(t)|}.
		\end{equation*}
		Letting $ \delta $ tend to zero, we then obtain the result.
	\end{proof}
	
	Finally, we can now prove Lemma~\ref{lemma:convergence_local_time}.
	We can in fact prove a more general result from which Lemma~\ref{lemma:convergence_local_time} directly follows.
	
	\begin{lemma} \label{lemma:cvg_average_diagonal}
		For any bounded and measurable $ f : \Omega \to \R $, for any $ T > 0 $ and for $ \mu $-almost every environment $ \omega \in \Omega $,
		\begin{equation*}
			\sup_{t \in [0,T]} \Eq{ \, \abs{ \frac{1}{\sqrt{n}} \int_{0}^{nt} f(T^{\xi^{1,n}_s} \omega) \1{\xi^{1,n}_s = \xi^{2,n}_s} ds - \left( \mean{f \pi^2} \right) L^0_t(X^1-X^2) } \, } \cvgas{n} 0.
		\end{equation*}
	\end{lemma}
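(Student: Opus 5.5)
Write $g := f\pi^2$, so that $f(T^x\omega)\1{\cdot} = g(T^x\omega)\,\1{\cdot}/\pi(\omega,x)^2$. The quantity of interest is then $\sum_{x\in\Z} g(T^x\omega) H^n_t(x,x)$. We want to compare this with $\big(\mean{g}\big)\sum_x H^n_t(x,x)$ and then apply Theorem~\ref{thm:intersection_local_time}. The comparison is a spatial ergodic averaging step, and we implement it through the smoothing already used in the proof of Theorem~\ref{thm:intersection_local_time}.

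For $\delta>0$ we decompose
\begin{multline*}
\sum_x g(T^x\omega) H^n_t(x,x) - \Big(\mean{g}\Big)\sum_x H^n_t(x,x) \\
= \sum_x \frac{1}{(2\delta\sqrt n)^2}\sum_{y_1,y_2\in B(0,\delta\sqrt n)\cap\Z} g(T^x\omega)\big(H^n_t(x,x)-H^n_t(x+y_1,x+y_2)\big) \\
+ \sum_x H^n_t(x,x)\Big(\Gamma^{n,\delta}(\omega,x) - \mean{g}\Big),
\end{multline*}
where
\begin{equation*}
\Gamma^{n,\delta}(\omega,x) := \frac{1}{(2\delta\sqrt n)^2}\sum_{y_1,y_2\in B(0,\delta\sqrt n)\cap\Z} g(T^{x-y_1}\omega)\,\frac{\pi(\omega,x)\pi(\omega,x)}{\pi(\omega,x)\pi(\omega,x)}.
\end{equation*}
More carefully, we reindex the double sum of $g(T^x\omega)H^n_t(x+y_1,x+y_2)$ by $x'=x+y_1$. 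The coefficient of each $H^n_t(x',x'+y_2-y_1)$ is then an average of $g$ over a window. To avoid off-diagonal terms, it is cleaner to smooth both sides the same way. We therefore apply Lemma~\ref{lemma:continuity_H_xy} twice: once with weight $f_1(\omega,x,y,z)=g(T^x\omega)$ and once with the constant weight $1$. This replaces both $\sum_x g(T^x\omega)H^n_t(x,x)$ and $\sum_x H^n_t(x,x)$ by their smoothed versions, at an $L^2$ cost $C t^{3/4}\delta^{1/2}$ as in the bound on $B^n_\delta$.

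The smoothed quantities are of the form
\begin{equation*}
\sum_{x'} \sum_{y}\big(\text{window average of } g \text{ near } x'\big)\, H^n_t(x',x'+y).
\end{equation*}
We recombine them by the inverse move. The difference then reduces to
\begin{equation*}
\sum_x H^n_t(x,x)\,\big(G^{n,\delta}(\omega,x)-\mean{g}\big),
\end{equation*}
up to further terms of the same continuity type, where
\begin{equation*}
G^{n,\delta}(\omega,x) = \frac{1}{2\delta\sqrt n}\sum_{|k-x|\le\delta\sqrt n} g(T^k\omega).
\end{equation*}

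This last term is handled exactly like $A^n_\delta$. Since $g$ is bounded, by the argument of Lemma~4.3 in \cite{forien_stepping_2019} (the Birkhoff ergodic theorem, applied now to $g$ in place of $\pi$) we have $\sup_{x\in\mathcal C}|G^{n,\delta}(\omega,x/\sqrt n\cdot)-\mean g|\to0$ for $\mu$-a.e.\ $\omega$ and every compact $\mathcal C$. The contribution from outside $\mathcal C$ is controlled by the tightness estimate $\Eq{\sum_{x\in\frac1{\sqrt n}\Z\cap A}H^n_t}\le C\int_0^t s^{-1/2}\Pq{\tilde\xi^{2,n}_s\in A}ds$. We also use the uniform bound $\Eq{\sum_xH^n_t(x,x)}\le C\sqrt t$.

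Letting $n\to\infty$ and then $\delta\to0$ gives
\begin{equation*}
\sup_{t\le T}\Eq{\Big|\sum_x g(T^x\omega)H^n_t(x,x)-\mean g\sum_xH^n_t(x,x)\Big|}\to0.
\end{equation*}
Theorem~\ref{thm:intersection_local_time} then concludes; the supremum over $t$ follows since all the bounds are uniform in $t\in[0,T]$. Lemma~\ref{lemma:convergence_local_time} is the case $f=1/N$, for which $\mean{f\pi^2}=\gamma$. Convergence in $L^1$ uniformly in $t$ gives convergence in probability of finite-dimensional marginals. Since the processes are nondecreasing and the limit is continuous, this upgrades to convergence in the Skorokhod topology.

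The main obstacle is the bookkeeping in the smoothing step. We must check that moving the weight $g(T^x\omega)$ from the diagonal to a window average costs only terms of the form controlled by Lemma~\ref{lemma:continuity_H_xy}, which allows a general bounded weight $f(T^x\omega,T^{x+y}\omega,T^{x+z}\omega)$ and hence the weight $g(T^{x}\omega)$ paired with $H^n_t(x+y,x+z)-H^n_t(x,x)$. If the reindexing proves awkward, the fallback is to define the smoothing directly as
\begin{equation*}
\sum_x g(T^x\omega)\,\frac{1}{(2\delta\sqrt n)^2}\sum_{y_1,y_2}\pi(x+y_1)\pi(x+y_2)H^n_t(x+y_1,x+y_2),
\end{equation*}
and to express it as $\frac1n\int_0^{nt}W^n_\delta(\xi^{1,n}_s,\xi^{2,n}_s)\,ds$. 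Here $W^n_\delta(z_1,z_2)=\frac{1}{4\delta^2\sqrt n}\sum_x g(T^x\omega)\1{|x-z_i|\le\delta\sqrt n}$. By the ergodic theorem, $W^n_\delta(\sqrt n x_1,\sqrt n x_2)$ converges locally uniformly to $\mean g\,V_\delta(x_1,x_2)$, and the proof of Theorem~\ref{thm:intersection_local_time} then goes through verbatim.
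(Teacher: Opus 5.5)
Your fallback argument is correct, but your primary argument, as written, is not yet a proof. Its first displayed decomposition is not an identity. The sum $\sum_x g(T^x\omega)(2\delta\sqrt n)^{-2}\sum_{y_1,y_2}H^n_t(x+y_1,x+y_2)$ involves off-diagonal occupation times, so it cannot be rewritten as $\sum_x H^n_t(x,x)\,\Gamma^{n,\delta}(\omega,x)$. The definition of $\Gamma^{n,\delta}$ also contains the trivial ratio $\pi\pi/\pi\pi$. The subsequent ``inverse move'' and ``further terms of the same continuity type'' are never specified.

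The paper's proof is the clean version of what you are aiming at: it smooths only along the diagonal. It applies Lemma~\ref{lemma:continuity_H_xy} with $A=\{(w,w):|w|\le\delta\sqrt n\}$ and weight $g(T^x\omega)=f(T^x\omega)\pi(T^x\omega)^2$, comparing $H^n_t(x,x)$ with $H^n_t(y,y)$ for $|y-x|\le\delta\sqrt n$. Exchanging the two sums then moves the window onto $g$. This produces exactly your term $\sum_y H^n_t(y,y)\big(G^{n,\delta}(\omega,y)-\bar g\big)$, with $\bar g:=\int_\Omega g\,d\mu$, and no off-diagonal remainder. That term is treated like $A^n_\delta$, and Theorem~\ref{thm:intersection_local_time} is applied as a black box.

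Incidentally, your two-sided smoothing needs no inverse move at all. Up to the normalisation $|A\cap\Z^2|$ versus $4\delta^2 n$, the difference of the two smoothed sums equals $\frac1n\int_0^{nt}(W^n_\delta-\bar g V^n_\delta)(\xi^{1,n}_s,\xi^{2,n}_s)\,\pi(\xi^{1,n}_s)^{-1}\pi(\xi^{2,n}_s)^{-1}ds$. This vanishes by the same ergodic estimate as in your fallback.

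Your fallback is a genuinely different route. Instead of reducing to Theorem~\ref{thm:intersection_local_time}, it reruns that proof with the weight built in:
\begin{itemize}
\item the $A$-term is $\sum_x g(T^x\omega)H^n_t(x,x)\big(1-\Pi^{n,\delta}(\omega,x/\sqrt n)^2\big)$, harmless since $g$ is bounded;
\item the $B$-term is Lemma~\ref{lemma:continuity_H_xy} with weight $g(T^x\omega)\pi(T^{x+y}\omega)\pi(T^{x+z}\omega)$, which is of the admissible form;
\item the $D$-term is $\bar g\,D_\delta(t)$.
\end{itemize}
The only real changes are in the $C$-term. You need Birkhoff averages of $g$ over the variable-length intervals $B(z_1,\delta\sqrt n)\cap B(z_2,\delta\sqrt n)$; these follow from the one-sided ergodic theorem for $\frac1m\sum_{0\le k<m}g(T^{\pm k}\omega)$. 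Also, $W^n_\delta(\sqrt n\,\cdot,\sqrt n\,\cdot)\to\bar g V_\delta$ only locally uniformly, whereas $V^n_\delta(\sqrt n\,\cdot,\sqrt n\,\cdot)\to V_\delta$ uniformly on $\R^2$. So you must invoke tightness of $\tilde\xi^{i,n}$ there, and the argument is not quite verbatim.

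What each route buys: yours gives a self-contained proof that contains Theorem~\ref{thm:intersection_local_time} as the case $g\equiv 1$ (i.e.\ $f=\pi^{-2}$). The paper's route is shorter, uses that theorem as a black box, and only needs ergodic averages over fixed-length centred windows, exactly of the form in Lemma~4.3 of \cite{forien_stepping_2019}.
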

	
	Lemma~\ref{lemma:convergence_local_time} then follows with $ f(\omega) = \frac{1}{N(\omega)} $.
	
	\begin{proof}
		We start by writing
		\begin{equation*}
			\frac{1}{\sqrt{n}} \int_{0}^{nt} f(T^{\xi^{1,n}_s} \omega) \1{\xi^{1,n}_s = \xi^{2,n}_s} ds = \sum_{x \in \Z} H^n_t(x,x) f(x) \pi(x)^2.
		\end{equation*}
		It follows that, for any $ \delta > 0 $,
		\begin{multline*}
			\frac{1}{\sqrt{n}} \int_{0}^{nt} f(T^{\xi^{1,n}_s} \omega) \1{\xi^{1,n}_s = \xi^{2,n}_s} ds - \left( \mean{f \pi^2} \right) L^0_t(X^1-X^2) \\
			\begin{aligned}
				&= \sum_{x \in \Z} \frac{1}{2\delta \sqrt{n}} \sum_{y \in B(x, \delta \sqrt{n}) \cap \Z} f(x) \pi(x)^2 \left( H^n_t(x,x) - H^n_t(y,y) \right) \\
				&\quad + \sum_{y \in \Z} \left( \frac{1}{2\delta \sqrt{n}} \sum_{x \in B(y,\delta \sqrt{n}) \cap \Z} f(x)\pi(x)^2 - \mean{f \pi^2} \right) H^n_t(y,y) \\
				&\quad + \mean{f \pi^2} \left[ \sum_{x \in \Z} H^n_t(x,x) - L^0_t(X^1 - X^2) \right].
			\end{aligned}
		\end{multline*}
		The first term can then be bounded using Lemma~\ref{lemma:continuity_H_xy}, the second term can be bounded as $ A^n_\delta $ in the proof of Theorem~\ref{thm:intersection_local_time}, and the last term tends to zero by Theorem~\ref{thm:intersection_local_time}.
		This concludes the proof of Lemma~\ref{lemma:cvg_average_diagonal}.
	\end{proof}
	
	\bibliography{biblioErratum}
\end{document}